\definecolor{darkgreen}{rgb}{0,0.45,0}
\DeclareMathAlphabet{\mathbf}{OT1}{cmr}{b}{n}
\def\matrixobject@{%
  \edef \next@{={\DirectionfromtheDirection@ }}%
  \expandafter \toks@ \next@ \plainxy@
  \let\xy@@ix@=\xyq@@toksix@
  \xyFN@ \OBJECT@}
\let\xy@entry@@norm=\entry@@norm
\def\entry@@norm@patched{%
  \let\object@=\matrixobject@
  \xy@entry@@norm }
\newcommand{\twocong}[2][0.5]{\ar@{}[#2] \save ?(#1)*{\cong}\restore}
\newcommand{\twoeq}[2][0.5]{\ar@{}[#2] \save ?(#1)*{=}\restore}
\newcommand{\rtwocell}[3][0.5]{\ar@{}[#2] \ar@{=>}?(#1)+/l 0.2cm/;?(#1)+/r 0.2cm/^{#3}}
\newcommand{\ltwocell}[3][0.5]{\ar@{}[#2] \ar@{=>}?(#1)+/r 0.2cm/;?(#1)+/l 0.2cm/^{#3}}
\newcommand{\ltwocello}[3][0.5]{\ar@{}[#2] \ar@{=>}?(#1)+/r 0.2cm/;?(#1)+/l 0.2cm/_{#3}}
\newcommand{\dtwocell}[3][0.5]{\ar@{}[#2] \ar@{=>}?(#1)+/u  0.2cm/;?(#1)+/d 0.2cm/^{#3}}
\newcommand{\dltwocell}[3][0.5]{\ar@{}[#2] \ar@{=>}?(#1)+/ur  0.2cm/;?(#1)+/dl 0.2cm/^{#3}}
\newcommand{\drtwocell}[3][0.5]{\ar@{}[#2] \ar@{=>}?(#1)+/ul  0.2cm/;?(#1)+/dr 0.2cm/^{#3}}
\newcommand{\dthreecell}[3][0.5]{\ar@{}[#2] \ar@3{->}?(#1)+/u  0.2cm/;?(#1)+/d 0.2cm/^{#3}}
\newcommand{\utwocell}[3][0.5]{\ar@{}[#2] \ar@{=>}?(#1)+/d 0.2cm/;?(#1)+/u 0.2cm/_{#3}}
\newcommand{\dtwocelltarg}[3][0.5]{\ar@{}#2 \ar@{=>}?(#1)+/u  0.2cm/;?(#1)+/d 0.2cm/^{#3}}
\newcommand{\utwocelltarg}[3][0.5]{\ar@{}#2 \ar@{=>}?(#1)+/d  0.2cm/;?(#1)+/u 0.2cm/_{#3}}
\newcommand{\pullbackcorner}[1][dr]{\save*!/#1-1.2pc/#1:(-1,1)@^{|-}\restore}
\newcommand{\sh}[2]{**{!/#1 -#2/}}
\DeclareMathOperator{\ob}{ob}
\DeclareMathOperator{\colim}{colim}
\newcommand{\cat}[1]{\mathbf{#1}}
\newcommand{\id}{\mathrm{id}}
\newcommand{\thg}{{\mathord{\text{--}}}}
\newcommand{\defeq}{\mathrel{\mathop:}=}
\newcommand{\cd}[2][]{\vcenter{\hbox{\xymatrix#1{#2}}}}
\renewcommand{\phi}{\varphi}
\newcommand{\A}{{\mathcal A}}
\newcommand{\C}{{\mathcal C}}
\newcommand{\D}{{\mathcal D}}
\newcommand{\E}{{\mathcal E}}
\newcommand{\K}{{\mathcal K}}
\newcommand{\M}{{\mathcal M}}
\renewcommand{\P}{{\mathcal P}}
\let\sec\S
\renewcommand{\S}{{\mathcal S}}
\newcommand{\V}{{\mathcal V}}
\newcommand{\W}{{\mathcal W}}
\newcommand{\X}{{\mathcal X}}
\newcommand{\Z}{{\mathcal Z}}
\newcommand{\xtor}[1]{\cdl[@1]{{} \ar[r]|-{\object@{|}}^{#1} & {}}}
\newcommand{\tor}{\ensuremath{\relbar\joinrel\mapstochar\joinrel\rightarrow}}
\def\hookleftarrowfill@{\arrowfill@\leftarrow\relbar{\relbar\joinrel\rhook}}
\def\twoheadleftarrowfill@{\arrowfill@\twoheadleftarrow\relbar\relbar}
\def\leftbararrowfill@{\arrowdoublefill@{\leftarrow\mkern-5mu}\relbar\mapstochar\relbar\relbar}
\def\Leftbararrowfill@{\arrowdoublefill@{\Leftarrow\mkern-2mu}\Relbar\Mapstochar\Relbar\Relbar}
\def\leftringarrowfill@{\arrowdoublefill@{\leftarrow\mkern-3mu}\relbar{\mkern-3mu\circ\mkern-2mu}\relbar\relbar}
\def\lefttriarrowfill@{\arrowfill@{\mathrel\triangleleft\mkern0.5mu\joinrel\relbar}\relbar\relbar}
\def\Lefttriarrowfill@{\arrowfill@{\mathrel\triangleleft\mkern1mu\joinrel\Relbar}\Relbar\Relbar}
\def\hookrightarrowfill@{\arrowfill@{\lhook\joinrel\relbar}\relbar\rightarrow}
\def\twoheadrightarrowfill@{\arrowfill@\relbar\relbar\twoheadrightarrow}
\def\rightbararrowfill@{\arrowdoublefill@{\relbar\mkern-0.5mu}\relbar\mapstochar\relbar\rightarrow}
\def\Rightbararrowfill@{\arrowdoublefill@{\Relbar\mkern-2mu}\Relbar\Mapstochar\Relbar\Rightarrow}
\def\rightringarrowfill@{\arrowdoublefill@\relbar\relbar{\mkern-2mu\circ\mkern-3mu}\relbar{\mkern-3mu\rightarrow}}
\def\righttriarrowfill@{\arrowfill@\relbar\relbar{\relbar\joinrel\mkern0.5mu\mathrel\triangleright}}
\def\Righttriarrowfill@{\arrowfill@\Relbar\Relbar{\Relbar\joinrel\mkern1mu\mathrel\triangleright}}
\def\leftrightarrowfill@{\arrowfill@\leftarrow\relbar\rightarrow}
\def\mapstofill@{\arrowfill@{\mapstochar\relbar}\relbar\rightarrow}
\renewcommand*\xleftarrow[2][]{\ext@arrow 20{20}0\leftarrowfill@{#1}{#2}}
\providecommand*\xLeftarrow[2][]{\ext@arrow 60{22}0{\Leftarrowfill@}{#1}{#2}}
\providecommand*\xhookleftarrow[2][]{\ext@arrow 10{20}0\hookleftarrowfill@{#1}{#2}}
\providecommand*\xtwoheadleftarrow[2][]{\ext@arrow 60{20}0\twoheadleftarrowfill@{#1}{#2}}
\providecommand*\xleftbararrow[2][]{\ext@arrow 10{22}0\leftbararrowfill@{#1}{#2}}
\providecommand*\xLeftbararrow[2][]{\ext@arrow 50{24}0\Leftbararrowfill@{#1}{#2}}
\providecommand*\xleftringarrow[2][]{\ext@arrow 10{26}0\leftringarrowfill@{#1}{#2}}
\providecommand*\xlefttriarrow[2][]{\ext@arrow 80{24}0\lefttriarrowfill@{#1}{#2}}
\providecommand*\xLefttriarrow[2][]{\ext@arrow 80{24}0\Lefttriarrowfill@{#1}{#2}}
\renewcommand*\xrightarrow[2][]{\ext@arrow 01{20}0\rightarrowfill@{#1}{#2}}
\providecommand*\xRightarrow[2][]{\ext@arrow 04{22}0{\Rightarrowfill@}{#1}{#2}}
\providecommand*\xhookrightarrow[2][]{\ext@arrow 00{20}0\hookrightarrowfill@{#1}{#2}}
\providecommand*\xtwoheadrightarrow[2][]{\ext@arrow 03{20}0\twoheadrightarrowfill@{#1}{#2}}
\providecommand*\xrightbararrow[2][]{\ext@arrow 01{22}0\rightbararrowfill@{#1}{#2}}
\providecommand*\xRightbararrow[2][]{\ext@arrow 04{24}0\Rightbararrowfill@{#1}{#2}}
\providecommand*\xrightringarrow[2][]{\ext@arrow 01{26}0\rightringarrowfill@{#1}{#2}}
\providecommand*\xrighttriarrow[2][]{\ext@arrow 07{24}0\righttriarrowfill@{#1}{#2}}
\providecommand*\xRighttriarrow[2][]{\ext@arrow 07{24}0\Righttriarrowfill@{#1}{#2}}
\providecommand*\xmapsto[2][]{\ext@arrow 01{20}0\mapstofill@{#1}{#2}}
\providecommand*\xleftrightarrow[2][]{\ext@arrow 10{22}0\leftrightarrowfill@{#1}{#2}}
\providecommand*\xLeftrightarrow[2][]{\ext@arrow 10{27}0{\Leftrightarrowfill@}{#1}{#2}}
\numberwithin{equation}{section}
\theoremstyle{plain}
\newtheorem{Thm}{Theorem}
\newtheorem{Prop}[Thm]{Proposition}
\newtheorem{Cor}[Thm]{Corollary}
\newtheorem{Lemma}[Thm]{Lemma}
\theoremstyle{definition}
\newtheorem{Defn}[Thm]{Definition}
\newtheorem{Ex}[Thm]{Example}
\newtheorem{Exs}[Thm]{Examples}
\newtheorem{Rk}[Thm]{Remark}
\newcommand{\atwo}{{\mathbf 2}}
\begin{document}
\leftmargini=2em 
\title{An embedding theorem for tangent categories}
\author{Richard Garner} 
\address{Department of Mathematics, Macquarie University, NSW 2109, Australia} 
\email{richard.garner@mq.edu.au}

\subjclass[2000]{Primary: 18D20, 18F15, 58A05}
\date{\today}

\thanks{The support of Australian Research Council Discovery
  Projects DP110102360, DP160101519 and FT160100393 is gratefully acknowledged.}

\begin{abstract}
  \emph{Tangent categories} were introduced by Rosick\'y as a
  categorical setting for differential structures in algebra and
  geometry; in recent work of Cockett, Crutwell and others, they have
  also been applied to the study of differential structure in computer
  science. In this paper, we prove that every tangent category admits
  an embedding into a representable tangent category---one whose
  tangent structure is given by exponentiating by a free-standing
  tangent vector, as in, for example, any well-adapted model of Kock
  and Lawvere's synthetic differential geometry. The key step in our
  proof uses a coherence theorem for tangent categories due to Leung
  to exhibit tangent categories as a certain kind of enriched
  category.
\end{abstract}
\maketitle

\section{Introduction}
\emph{Tangent categories}, introduced by Rosick\'y
in~\cite{Rosicky1984Abstract}, provide an category-theoretic setting
for differential structures in geometry, algebra and computer science.
A tangent structure on a category $\C$ comprises a functor
$T \colon \C \rightarrow \C$ together with associated natural
transformations---for example a natural transformation
$p \colon T \Rightarrow 1$ making each $TM$ into an object over
$M$---which capture just those properties of the ``tangent bundle''
functor on the category $\cat{Man}$ of smooth manifolds that are
necessary to develop a reasonable abstract differential calculus. The
canonical example is $\cat{Man}$ itself, but others include the
category of schemes (using the Zariski tangent spaces), the category
of convenient manifolds~\cite{Baez2011Convenient} and, in computer
science, any model of Ehrhard and Regnier's \emph{differential
  $\lambda$-calculus}~\cite{Ehrhard2003The-differential}.

A more powerful category-theoretic approach to differential structures
is the \emph{synthetic differential geometry} developed by Kock,
Lawvere, Dubuc and
others~\cite{Dubuc1979Sur-les-modeles,Kock1981Synthetic,Moerdijk1991Models}.
It is more powerful because it presupposes more: a so-called
``well-adapted'' model of synthetic differential geometry is a
Grothendieck topos $\E$ equipped with a full embedding
$\iota \colon \cat{Man} \rightarrow \E$ of the category of smooth
manifolds, obeying axioms which, among other things, assert that the
affine line $R = \iota(\mathbb R)$ has enough nilpotent elements to
detect the differential structure. In particular, in a well-adapted
model, the tangent bundle of a smooth manifold $M$ is determined by
the cartesian closed structure of $\E$ through the equation
$\iota(TM) = \iota(M)^D$; here, $D$ is the ``disembodied tangent
vector'', which in the internal logic of $\E$ comprises the elements
of $R$ which square to zero.

Any model $\E$ of synthetic differential geometry gives rise to a
tangent category, whose underlying category comprises the
\emph{microlinear} objects~\cite[Chapter~V]{Moerdijk1991Models} of
$\E$ (among which are found the embeddings $\iota(M)$ of manifolds)
and whose ``tangent bundle'' functor is $(\thg)^D$;
see~\cite[Section~5]{Cockett2013Differential}. This raises the
question of whether any tangent category can be embedded into the
microlinear objects of a well-adapted model of synthetic differential
geometry; and while this is probably too much to ask, it has been
conjectured that any tangent category should at least be embeddable in
a \emph{representable} tangent category---one whose ``tangent bundle''
functor is of the form $(\thg)^D$. The goal of this article is to
prove this conjecture.

Our approach uses ideas of enriched category
theory~\cite{Kelly1982Basic}. By exploiting Leung's coherence
result~\cite{Leung2016The-free} for tangent categories, we are able to
describe a cartesian closed category $\E$ such that tangent categories
are the same thing as $\E$-enriched categories admitting certain
\emph{powers}~\cite[Section~3.7]{Kelly1982Basic}---a kind of
enriched-categorical limit. Standard enriched category theory then
shows that, for any small $\E$-category $\C$, the $\E$-category of
presheaves $[\C^\mathrm{op}, \E]$ is complete, cocomplete and
cartesian closed as a $\E$-category. Completeness means that, in
particular, $[\C^\mathrm{op}, \E]$ bears the powers necessary for
tangent structure; but cocompleteness and cartesian closure allow
these powers to be computed as internal homs $(\thg)^D$, so that any
presheaf $\E$-category bears \emph{representable} tangent structure.
It follows that, for any (small) tangent category $\C$, the
$\E$-categorical Yoneda embedding
$\C \rightarrow [\C^\mathrm{op}, \E]$ is a full embedding of $\C$ into
a representable tangent category.

Beyond allowing an outstanding conjecture to be settled, we believe
that the enriched-categorical approach to tangent structure has
independent value, which will be explored further in future work. In
one direction, the category $\E$ over which our enrichment exists
admits an abstract version of the \emph{Campbell--Baker--Hausdorff}
construction by which a Lie algebra can be formally integrated to a
formal group law (i.e., encoding the purely algebraic part of Lie's
theorems). Via enrichment, this construction can be transported to any
suitable $\E$-enriched category, so allowing a version of Lie theory
to be associated uniformly with any category with differential
structure. Another direction we intend to explore in future research
involves modifying the category $\E$ to capture generalised forms of
differential structure. One possibility involves \emph{non-linear} or
\emph{arithmetic} differential geometry in the sense
of~\cite{Buium1997Arithmetic}, which should involve enrichments over a
suitable category of \emph{$k$-$k$-birings} in the sense
of~\cite{Tall1970Representable,Borger2005Plethystic}. Another
possibility would be to explore ``two-dimensional Lie theory'' by
replacing the cartesian closed category $\E$ with a suitable cartesian
closed bicategory of $k$-linear categories, and considering
generalised enrichments over this in the sense
of~\cite{Garner2013Enriched}.

Besides this introduction, this paper comprises the following parts.
Section~\ref{sec:background} recalls the basic notions of tangent
category and representable tangent category, along with the coherence
result of Leung on which our constructions will rest.
Section~\ref{sec:tang-categ-as-1} extends Leung's result so as to
exhibit an equivalence between the $2$-category of tangent categories
and a certain $2$-category of
\emph{actegories}~\cite{McCrudden2000Categories}---categories equipped
with an action by a monoidal category. Section~\ref{sec:tang-categ-as}
then applies two results from enriched category theory, due to Wood
and Day, to exhibit these actegories as categories enriched over a
certain base $\E$. Then, in Section~\ref{sec:an-embedding-theorem}, we
see that this base $\E$ is complete, cocomplete and cartesian closed,
and using this, deduce that the desired embedding arises simply as the
$\E$-enriched Yoneda embedding. Finally, in
Section~\ref{sec:an-expl-pres}, we unfold the abstract constructions
to give a concrete description of the embedding of any tangent
category into a representable one.

\section{Background}
\label{sec:background}

We begin by recalling the notion of tangent category and representable
tangent category. Rosick\'y's original
definition in~\cite{Rosicky1984Abstract} requires \emph{abelian group}
structure on the fibres of the tangent bundle; with motivation from
computer science, Cockett and Crutwell weaken this
in~\cite{Cockett2013Differential} to involve only \emph{commutative
  monoid} structure, and we adopt their more general formulation here,
though our results are equally valid under the narrower
definition.

\begin{Defn}
  A \emph{tangent category} is a category $\C$ equipped with:
  \begin{enumerate}[(i)]
  \item A functor $T \colon \C \rightarrow \C$ and a natural
    transformation $p \colon T \Rightarrow \id_\C$ such that each $n$-fold
    fibre product $TX \times_{p_X} \dots \times_{p_X} TX$ exists in $\C$
    and is preserved by each functor $T^m$;\vskip0.25\baselineskip
  \item Natural transformations
    \begin{equation*}
      e \colon \id_\C \Rightarrow T \qquad m \colon T \times_p T \Rightarrow T \qquad \ell \colon T \Rightarrow TT \quad \text{and} \quad c \colon TT \Rightarrow TT\rlap{ ,}
    \end{equation*}
  \end{enumerate}
  subject to the following axioms:
  \begin{enumerate}[(i)]
    \addtocounter{enumi}{2}
  \item The maps $e_X$ and $m_X$ endow each $p_X \colon TX \rightarrow
    X$ with the structure of a commutative monoid in the slice
    category $\C / X$;\vskip0.25\baselineskip
  \item The following squares commute:
    \begin{equation*}
      \cd[@-0.2em]{
        {T} \ar[r]^-{\ell} \ar[d]_{p} &
        {T^2} \ar[d]^{Tp} &
        {\id_\C} \ar[r]^-{e} \ar[d]_{e} &
        {T} \ar[d]^{Te} &
        \sh{l}{0.2em}{T \times_{p} T} \ar[r]^-{\ell \times_{e} \ell} \ar[d]_{m} &
        \sh{r}{0.2em} {T^2 \times_{Tp} T^2} \ar[d]^{Tm} \\
        {\id_\C} \ar[r]^-{e} &
        {T} &
        {T} \ar[r]^-{\ell} &
        {T^2} &
        {T} \ar[r]^-{\ell} &
        {T^2}\rlap{ ;}
      }
    \end{equation*}
  \item The following squares commute:
    \begin{equation*}
      \cd[@-0.2em]{
        {T^2} \ar[r]^-{c} \ar[d]_{Tp} &
        {T^2} \ar[d]^{pT} &
        {T} \ar[r]^-{\id} \ar[d]_{Te} &
        {T} \ar[d]^{eT} &
        \sh{l}{0.2em}{T^2 \times_{Tp} T^2} \ar[r]^-{c \times_T c} \ar[d]_{Tm} &
        \sh{r}{0.2em} {T^2 \times_{pT} T^2} \ar[d]^{mT} \\
        {T} \ar[r]^-{\id} &
        {T} &
        {T^2} \ar[r]^-{c} &
        {T^2} &
        {T^2} \ar[r]^-{c} &
        {T^2}\rlap{ ;}
      }
    \end{equation*}
  \item $c^2 = \id$, $c \ell = \ell$, and the following diagrams
    commute:
    \begin{equation*}
      \cd[@-0.2em]{
        {T} \ar[r]^-{\ell} \ar[d]_-{\ell} &
        {T^2} \ar[d]^-{T\ell} &
        T^3 \ar[r]^-{Tc} \ar[d]_-{cT} & 
        T^3 \ar[r]^-{cT} &
        T^3 \ar[d]^-{Tc} &
        {T^2} \ar[r]^-{\ell T} \ar[d]_-{c} &
        {T^3} \ar[r]^-{Tc} & {T^3} \ar[d]^-{cT} \\
        {T^2} \ar[r]^-{\ell T} &
        {T^3} &
        T^3 \ar[r]^-{Tc} &
        T^3 \ar[r]^-{cT} &
        T^3 &
        {T^2} \ar[rr]^-{T\ell} & &
        {T^3}\rlap{ ;}
      }
    \end{equation*}
  \item Writing $w$ for the composite
    $T \times_p T \xrightarrow{\ell \times_e eT} T^2 \times_{Tp} T^2
    \xrightarrow{Tm} T^2$,
    each diagram of the following form is an equaliser:
    \begin{equation}\label{eq:3}
      \cd[@C+2.4em]{
        TX \times_{pX} TX \ar[r]^-{w_X} & 
        T^2X \ar@<3pt>[r]^-{Tp_X} \ar@<-3pt>[r]_-{e_X.p_X.p_{TX}} & TX\rlap{ .}
      }
    \end{equation}
  \end{enumerate}
  In the sequel we will, as in~\cite{Cockett2013Differential}, write
  $T_n X \defeq TX \times_{p_X} \dots \times_{p_X} TX$ for the
  $n$-fold fibre product of $TX$ over $X$ in any tangent category. We
  refer to these fibre products and the
  equalisers in~\eqref{eq:3} collectively as \emph{tangent limits}.
\end{Defn}
\begin{Exs}\hfill
  \label{ex:1}
  \begin{enumerate}[(i)]
  \item The category $\cat{Man}$ of smooth manifolds is a tangent
    category under the structure for which $p_X \colon TX \rightarrow X$
    is the usual tangent bundle of $X$.\vskip0.25\baselineskip
  \item Let $\E$ be any model of synthetic differential
    geometry~\cite{Kock1981Synthetic,Moerdijk1991Models} with embedding
    $\iota \colon \cat{Man} \rightarrow \E$ of the category of smooth
    manifolds. The full subcategory of $\E$ on the \emph{microlinear}
    objects~\cite[Chapter~V]{Moerdijk1991Models} is a tangent category
    under the structure which sends a microlinear $X \in \E$ to the
    exponential $X^D$ by the object
    $D = \{x \in \iota(\mathbb{R}) : x^2 = 0\}$;
    see~\cite[Section~5]{Cockett2013Differential}.
    \vskip0.25\baselineskip
  \item The category $\cat{Sch}$ of schemes over
    $\mathrm{Spec}\ \mathbb{Z}$ is a tangent category under the
    structure which sends a scheme $X$ to its Zariski tangent space
    $TX$. One way to see this is via the full embedding of $\cat{Sch}$
    into the local ring classifier
    $\cat{Zar} = \cat{Sh}(\cat{CRng}_f^\mathrm{op}, \Z)$. The generic
    local ring $R \in \cat{Zar}$ is known to be a ring of line
    type~\cite{Kock1981Synthetic}, and so the category
    $\cat{Zar}_{ml}$ of $R$-microlinear objects in $\cat{Zar}$ is
    by~\cite[Section~5]{Cockett2013Differential} a tangent category
    under the tangent functor $(\thg)^D$, where
    $D = \mathrm{Spec}\ \mathbb{Z}[x]/x^2$. Now, every affine scheme
    is microlinear by the Zariski analogue
    of~\cite[Proposition~7.1]{Moerdijk1991Models}; whereupon it
    follows easily that a general scheme is microlinear since it is a
    patching of affine schemes along open immersions, and these are
    formally \'etale
    by~\cite[Proposition~17.1.3(i)]{EGA44}. Thus
    $\cat{Sch}$ is contained in $\cat{Zar}_{ml}$; finally,
    by~\cite[Lemma~5.12.1]{Brandenburg2014Tensor}, $\cat{Sch}$ is
    closed in $\cat{Zar}_{ml}$ under the tangent functor described
    above, and moreover this coincides with the Zariski tangent
    space.\footnote{ Another way to obtain the tangent structure on
      $\cat{Sch}$ is via the join restriction category $\cat{Aff}_p$
      of affine schemes and partial maps defined on Zariski open
      subschemes. Because the tangent functor on affine schemes
      preserves open immersions, it extends to a tangent structure on
      $\cat{Aff}_p$; whence $\cat{Sch}$ is a tangent category
      by~\cite[Corollary~6.26]{Cockett2013Differential}, since it is
      the total category of the manifold completion of $\cat{Aff}_p$.}


  \vskip0.25\baselineskip
  \item The category $\cat{CRig}$ of commutative rigs (rings without
    negatives) has a tangent structure with $TA = A[x]/x^2$ and with
    $p_A \colon TA \rightarrow A$ defined by $p_A(a + bx) = a$. It
    follows that $T_2 A \cong A[x,y]/x^2, y^2, xy$ and that
    $T^2A \cong A[x,y]/x^2, y^2$, in which terms the remaining structure
    is given by:
    \begin{align*}
      e_A(a) &= a & m_A(a+bx+cy) &= a+(b+c)x \\
      \ell_A(a+bx) &= a+bxy & c_A(a+bx+cy+dxy) &= a+cx+by+dxy\rlap{ .}
    \end{align*}\vskip0.25\baselineskip
      \item The functor $T \colon \cat{CRig} \rightarrow \cat{CRig}$ of
    (iv) preserves limits and filtered colimits, and so has a left
    adjoint $S$. It is easy to see that this endows
    $\cat{CRig}^\mathrm{op}$ with tangent structure
    (see~\cite[Proposition~5.17]{Cockett2013Differential}).\vskip0.25\baselineskip

  \item Consider the category $\W$ whose objects are formal tensor
    products of the form $W_{n_1} \otimes \dots \otimes W_{n_k}$ and
    whose morphisms are rig homomorphisms between the corresponding
    actual tensor products of commutative rigs, where here
  \begin{equation*}
    W_n \defeq \mathbb N[x_1, \dots, x_n] / 
    (x_i x_j)_{1 \leqslant i \leqslant j \leqslant n}\rlap{ .}
  \end{equation*}
  (We may also write $W$ for $W_1$). The replete image $\overline \W$
  of $\W$ in $\cat{CRig}$ is closed under the tangent structure of
  (iv), since this structure satisfies $T_nA \cong W_n \otimes A$;
  transporting this restricted tangent structure across the equivalence
  $\W \simeq \overline \W$ yields one on $\W$ with
  $T_n(A) = W_n \otimes A$.
\end{enumerate}
\end{Exs}
\begin{Defn}
  \label{def:6}
  If $\C$ is a cartesian closed category, then a tangent structure on
  $\C$ is \emph{representable} if each functor $T_n$ is of the form
  $(\thg)^{D_n}$ for some $D_n \in \C$; equivalently, if
  $T \cong (\thg)^D$ for some $D \in \C$ and all finite fibre
  coproducts $D_n = D +_0 \cdots +_0 D$ exist. Here,
  $0 \colon 1 \rightarrow D$ is the map determined by the requirement
  that $p_X \colon X^D \rightarrow X$ is given by evaluation at $0$;
  by naturality of $p$ and of exponential transpose, this map is
  equally the composite
  \begin{equation*}
    1 \xrightarrow{\overline{\id_D}} D^D \xrightarrow{p_D} D\rlap{ .}
  \end{equation*}
\end{Defn}
\begin{Ex}
  \label{ex:2}
  The tangent structure in Examples~\ref{ex:1}(ii) above is
  representable, with
  $D_n = \{\vec x \in \iota(\mathbb{R})^n : x_i x_j = 0 \text{ for all
    $1 \leqslant i \leqslant j \leqslant n$}\}$. The tangent structure
  on schemes in (iii) is similarly representable, with
  $D_n = \mathrm{Spec}(\mathbb{Z}[x_1, \dots, x_n]/(x_ix_j))$. It is
  also the case that the tangent structure in (v) is representable.
  Indeed, for each $n$ we have
  $S_n \dashv T_n \colon \cat{CRig} \rightarrow \cat{CRig}$; since
  $T_nA \cong W_n \otimes A$ and tensor product in $\cat{CRig}$ is
  also coproduct, the left adjoint $S_n$ ``co-exponentiates'' by $W_n$
  in $\cat{CRig}$, and so dually is the exponential $(\thg)^{W_n}$ in
  $\cat{CRig}^\mathrm{op}$.
\end{Ex}

\begin{Defn}
  \label{def:2}
  A \emph{tangent functor} between tangent categories $\C$ and $\D$ is
  a functor $H \colon \C \rightarrow \D$ that preserves tangent
  limits---which we reiterate means each $n$-fold pullback
  $TX \times_{p_X} \dots \times_{p_X} TX$ and each
  equaliser~\eqref{eq:3}---together with a natural isomorphism
  $\varphi \colon HT \Rightarrow TH$ rendering commutative each
  diagram:
  \begin{equation}
    \label{eq:4}
    \begin{gathered}
      \cd{
        {H} \ar[r]^-{eH} \ar[d]_{He} &
        {TH} \ar[d]^{pH} &
        {HT \times_{Hp} HT} \ar[r]^-{\varphi \times_H \varphi} \ar[d]_{Hm} &
        {TH \times_{pH} TH} \ar[d]^{mH} \\
        {HT} \ar[r]_-{Hp} \ar[ur]_-{\varphi} &
        {H} &
        {HT} \ar[r]^-{\varphi} &
        {TH}
      } \\
      \cd{
        {HT} \ar[rr]^-{\varphi} \ar[d]_{H\ell} & &
        {TH} \ar[d]^{\ell H} &
        {HTT} \ar[r]^-{\varphi T} \ar[d]_{Hc} &
        {THT} \ar[r]^-{T\varphi} & 
        {TTH} \ar[d]^{cH} \\
        {HTT} \ar[r]^-{\varphi T} &
        {THT} \ar[r]^-{T \varphi T} & {TTH} &
        {HTT} \ar[r]^-{\varphi T} &
        {THT} \ar[r]^-{T \varphi} & {TTH} \rlap{ .}
      }
    \end{gathered}
  \end{equation}
  A \emph{tangent transformation} between tangent functors
  $H, K \colon \C \rightarrow \D$ comprises a natural transformation
  $\alpha \colon H \Rightarrow K$ such that
  $\varphi. \alpha T = T\alpha . \varphi \colon HT \Rightarrow TK$.
  We write $\cat{TANG}$ for the $2$-category of tangent categories.
\end{Defn}
\begin{Rk}
  \label{rk:1}
  If we drop from the definition of tangent functor the requirements
  that $H$ preserve tangent limits and that $\varphi$ be invertible,
  we obtain the notion of \emph{lax tangent functor}
  $H \colon \C \rightarrow \D$; we will make brief use of this in
  Section~\ref{sec:an-expl-pres} below.
\end{Rk}

The goal of this paper is to show that every small tangent category
admits a full embedding into a representable tangent category. Our
result relies heavily on the following coherence result, which is
Theorem~14.1 of~\cite{Leung2016The-free}:

\begin{Thm}
  \label{thm:6}
  The tangent category $\W$ of Examples~\ref{ex:1}(vi) classifies
  tangent structures; by this we mean that, to within isomorphism,
  tangent structures on a category $\C$ correspond with strong
  monoidal functors
  $\Phi \colon (\W, \otimes, \mathbb N) \rightarrow ([\C, \C], \circ,
  \id)$ sending tangent limits in $\W$ to pointwise limits in $[\C, \C]$.
\end{Thm}
\begin{proof}[Proof (sketch)]
  The monoidal category $\W$ as defined above is \emph{strict}
  monoidal, and every object is, in a unique way, the tensor of
  objects of the form $W_{n}$. This ``flexibility'' of $\W$ means that
  any strong monoidal functor $\W \rightarrow [\C, \C]$ can be
  replaced by an isomorphic strict monoidal one, and so we may
  reduce to this case.

  Suppose, then, that we have a strict monoidal,
  tangent-limit-preserving functor
  $\Phi \colon \W \rightarrow [\C, \C]$. Let us write
  $T = \Phi(W) \colon \C \rightarrow \C$, and write
  $p \colon T \Rightarrow \id$ for
  $\Phi(p_\mathbb{N}) \colon \Phi(W) \rightarrow \Phi(\mathbb{N})$.
  Since $W_n \in \W$ is the tangent limit
  $W \times_\mathbb{N} \dots \times_\mathbb{N} W$, we see that
  $\Phi(W_n)$ must be a (pointwise) pullback
  $T \times_p \dots \times_p T$ in $[\C,\C]$, which, as previously, we
  will denote by $T_n$. The definition of $\Phi$ on a general object
  of $\W$ is now forced by strict monoidality to be:
  \begin{equation}\label{eq:6}
    \Phi(W_{n_1} \otimes \cdots \otimes W_{n_k})  = 
    T_{n_1} \circ \cdots \circ T_{n_k}\rlap{ .}
  \end{equation}

  Given this equation, the images under $\Phi$ of the maps
  $e_\mathbb N, m_\mathbb N, \ell_\mathbb N$ and $c_\mathbb N$ of the
  tangent structure on $\W$ thereby provide the remaining data for a
  tangent structure on $\C$. The corresponding axioms are all
  immediate except for the requirement that $T^m$ should preserve the
  $n$-fold pullback $T \times_p \dots \times_p T$. To see that this
  holds, note that for any $A \in \W$ the square left below, being a
  tangent limit, is sent by $\Phi$ to a pointwise pullback in
  $[\C, \C]$; but in the category of squares in $\W$, it is isomorphic
  (via the symmetry maps) to the one on the right, which is thus also
  sent to a pointwise pullback; now taking $A = W^{\otimes m}$ gives
  the result.
  \begin{equation*}
    \cd{
      {W_{n+k} \otimes A} \ar[r]^-{} \ar[d]_{} \pullbackcorner &
      {W_n\otimes A} \ar[d]^{! \otimes A} \\
      {W_k\otimes A} \ar[r]^-{! \otimes A} &
      {A}
    }
    \qquad
    \cd{
      {A \otimes W_{n+k}} \ar[r]^-{} \ar[d]_{} \pullbackcorner &
      {A \otimes W_n} \ar[d]^{A \otimes !} \\
      {A \otimes W_k} \ar[r]^-{A \otimes !} &
      {A}\rlap{ .}
    } 
  \end{equation*}

  It follows that each $\Phi$ as in the statement of the theorem gives
  rise to a tangent structure on $\C$. Conversely, given a tangent
  structure on $\C$, we define the corresponding $\Phi$ on objects
  by~\eqref{eq:6}. On morphisms, we define the images of
  $p_\mathbb{N}$, $e_\mathbb{N}$, $m_\mathbb{N}$, $\ell_\mathbb{N}$
  and $c_\mathbb{N}$ to be the $p$, $e$, $m$, $\ell$ and $c$ of our
  given tangent structure; what is rather less trivial is defining
  $\Phi$ on the other morphisms of $\W$.

  First one shows that every morphism of $\W$ can be constructed from
  $p_\mathbb{N}$, $e_\mathbb{N}$, $m_\mathbb{N}$, $\ell_\mathbb{N}$
  and $c_\mathbb{N}$ using only the monoidal structure and tangent
  limits; this is done in~\cite[Proposition~9.1]{Leung2016The-free}.
  Since $\Phi$ is supposed to be strict monoidal and
  tangent-limit-preserving, this now provides a prospective definition
  of $\Phi$ on morphisms. It remains to prove that this definition
  makes $\Phi$ well-defined, functorial, strict monoidal, and
  tangent-limit-preserving; the (hard) proof of this is the content of
  Sections~12 and~13 of~\cite{Leung2016The-free}.
\end{proof}

In what follows, we will find it convenient to make use of the
following straightforward reformulation of Leung's result.

\begin{Cor}
  \label{cor:4}
  The tangent category $\W$ is the free tangent category on an object,
  in the sense that for any tangent category $\C$, the functor
  \begin{equation}\label{eq:2}
    \cat{TANG}(\W, \C) \rightarrow \C
  \end{equation}
  given by evaluation at $\mathbb N \in \W$ is an equivalence of
  categories.
\end{Cor}
\begin{proof}
  Let $\C$ be any tangent category. By Theorem~\ref{thm:6}, there is
  an essentially-unique strict monoidal tangent-limit-preserving
  functor $\Phi \colon \W \rightarrow [\C, \C]$, and it is easy to see
  that this functor is in fact a tangent functor, when $\W$ is endowed
  with its canonical tangent structure and $[\C, \C]$ with the
  pointwise one coming from $\C$. Now for any $X \in \C$, the
  evaluation functor $\mathrm{ev}_X \colon [\C, \C] \rightarrow \C$ is
  also a tangent functor, and so the composite
  \begin{equation*}
    F = \W \xrightarrow{\Phi} [\C, \C] \xrightarrow{\mathrm{ev}_X} \C
  \end{equation*}
  is a tangent functor with $F(\mathbb{N}) = X$. This shows
  that~\eqref{eq:2} is surjective on objects; it remains to prove that
  it is fully faithful. Given tangent functors
  $H, K \colon \W \rightrightarrows \C$, it is easy to see that any tangent
  transformation $\alpha \colon H \Rightarrow K$ must render
  commutative each square
  \begin{equation*}
    \cd[@C+5em]{
      {HT_{n_1} \cdots T_{n_k}} \ar[r]^-{\varphi^H_{n_1}
        T_{n_2} \cdots T_{n_k}} \ar[d]_{\alpha T_{n_1} \cdots T_{n_k}} & \cdots \ar[r]^-{T_{n_1} \cdots T_{n_{k-1}}
        \varphi^H_{n_k}} & T_{n_1} \cdots T_{n_k}H
      \ar[d]^{T_{n_1} \cdots T_{n_k}\alpha} \\
      {KT_{n_1} \cdots T_{n_k}} \ar[r]^-{\varphi^K_{n_1}
        T_{n_2} \cdots T_{n_k}}  & \cdots \ar[r]^-{T_{n_1} \cdots T_{n_{k-1}}
        \varphi^K_{n_k}} & T_{n_1} \cdots T_{n_k}K
    }
  \end{equation*}
  where we write
  $\varphi^H_{n} = \varphi^H \times_H \cdots \times_H \varphi^H$ and
  similarly for $\varphi^K_n$. As both horizontal maps are invertible,
  we see on evaluating at $\mathbb{N}$ that the component of $\alpha$
  at a general object
  $W_{n_1} \otimes \dots \otimes W_{n_k} = T_{n_1}(T_{n_2}(\cdots
  T_{n_k}(\mathbb{N})\dots))$ of $\W$ is determined by that at
  $\mathbb{N}$; whence~\eqref{eq:2} is faithful. For fullness, we must
  check that defining components in this manner from \emph{any} map
  $\alpha_\mathbb{N} \colon H(\mathbb{N}) \rightarrow K(\mathbb{N})$
  yields a tangent transformation $\alpha$. The key point is
  naturality, which will follow from the equalities in~\eqref{eq:4}
  \emph{so long as} we can show that every map in $\W$ is the
  $\mathbb{N}$-component of some transformation derived from the
  tangent structure; which follows
  by~\cite[Proposition~9.1]{Leung2016The-free}.
\end{proof}

\section{Tangent categories as actegories}
\label{sec:tang-categ-as-1}
We will need to extend Leung's result from tangent categories to the
maps between them; for this it will be convenient to deploy the notion
of actegory~\cite{McCrudden2000Categories}. If $\M$ is a
monoidal category, then an \emph{$\M$-actegory} is a category $\C$
equipped with an ``action'' functor
$\ast \colon \M \times \C \rightarrow \C$ which is associative and
unital to within coherent isomorphism. By this, we mean that it comes
endowed with natural families of isomorphisms
$\alpha \colon (M \otimes N) \ast X \rightarrow M \ast (N \ast X)$ and
$\lambda \colon I \ast X \rightarrow X$ satisfying a pentagon and a
triangle axiom, as given in~\cite[\sec 3]{McCrudden2000Categories},
for example. $\M$-actegories are the objects of a $2$-category
$\M\text-\cat{ACT}$, wherein a $1$-cell is a functor
$F \colon \C \rightarrow \D$ equipped with natural isomorphisms
$\mu \colon F(M \ast X) \rightarrow M \ast FX$ compatible with
$\alpha$ and $\lambda$; and a $2$-cell is a transformation
$\alpha \colon F \Rightarrow G$ compatible with $\mu$.


With $\W = (\W, \otimes, \mathbb N)$ given as before, we
now define a \emph{tangent $\W$-actegory} to be a $\W$-actegory
$(\C, \ast)$ for which each functor
$(\thg) \ast X \colon \W \rightarrow \C$ preserves tangent limits;
these span a full and locally full
sub-$2$-category 
$\W\text-\cat{ACT}_\mathrm{t}$ of $\W\text-\cat{ACT}$.

\begin{Thm}
  \label{thm:2}
  The $2$-category $\cat{TANG}$ is equivalent to $\W\text-\cat{ACT}_\mathrm{t}$.
\end{Thm}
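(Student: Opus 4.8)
The plan is to build the equivalence $\cat{TANG} \simeq \W\text-\cat{ACT}_\mathrm{t}$ out of Theorem~\ref{thm:1} and Corollary~\ref{cor:1}. The key observation is that the data of a $\W$-actegory structure on $\C$ is the same as a strong monoidal functor $\W \to [\C,\C]$: indeed, a strong monoidal functor $(\W,\otimes,\mathbb N) \to ([\C,\C],\circ,\id)$ is precisely a strong monoidal functor into the monoidal category of endofunctors of $\C$, and these correspond (by the standard ``currying'' adjunction, i.e.\ the universal property of $[\C,\C]$ as the endomorphism monoid of $\C$ in $\cat{CAT}$) to actions $\ast \colon \W \times \C \to \C$ together with coherent isomorphisms $\alpha$ and $\lambda$. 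Moreover the condition that each $(\thg)\ast X \colon \W \to \C$ preserve tangent limits translates exactly into the condition that the corresponding strong monoidal functor $\Phi \colon \W \to [\C,\C]$ send tangent limits to \emph{pointwise} limits, since a limit in $[\C,\C]$ is pointwise iff it is preserved by each evaluation functor $\mathrm{ev}_X \colon [\C,\C] \to \C$, and $\mathrm{ev}_X \circ \Phi \cong (\thg) \ast X$. By Corollary~\ref{cor:1}, these in turn correspond, up to isomorphism, with tangent structures on $\C$. So on objects the equivalence is essentially Corollary~\ref{cor:1} repackaged.

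First I would set up the $2$-functor $\cat{TANG} \to \W\text-\cat{ACT}_\mathrm{t}$ explicitly. Given a tangent category $\C$ with structure functor $T$, let $\Phi_\C \colon \W \to [\C,\C]$ be the strong monoidal tangent-limit-preserving functor of Corollary~\ref{cor:1}, and let $\ast_\C \colon \W \times \C \to \C$ be its transpose, with $\alpha, \lambda$ the transposes of the monoidal constraints of $\Phi_\C$; the discussion above shows $(\C,\ast_\C)$ lies in $\W\text-\cat{ACT}_\mathrm{t}$. On a tangent functor $H \colon \C \to \D$ with structure isomorphism $\varphi \colon HT \Rightarrow TH$, I would produce a map of $\W$-actegories: the comparison $\mu \colon H(W \ast X) \to W \ast HX$ should be (the transpose of) $\varphi$, and more generally at a general object $W_{n_1}\otimes\dots\otimes W_{n_k}$ it is built from $\varphi$ exactly as in the string of isomorphisms $\varphi^H_{n_1}\cdots$ appearing in the sketch proof of Theorem~\ref{thm:1}; compatibility of $\mu$ with $\alpha$ and $\lambda$ then amounts to the coherence diagrams~\eqref{eq:4} together with Leung's coherence. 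A tangent transformation, being a natural transformation compatible with $\varphi$, transports directly to a $2$-cell of $\W$-actegories compatible with $\mu$. This assignment is $2$-functorial by inspection.

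To show this $2$-functor is a $2$-equivalence I would verify it is essentially surjective, full and faithful on hom-categories (i.e.\ a local equivalence). Essential surjectivity on objects is the object-level statement above, via Corollary~\ref{cor:1}: any tangent $\W$-actegory $(\C,\ast)$ has $\ast$ transposing to some $\Phi$ satisfying the hypotheses of Corollary~\ref{cor:1}, hence $\C$ carries a tangent structure whose associated actegory is isomorphic to $(\C,\ast)$ in $\W\text-\cat{ACT}_\mathrm{t}$. For the local equivalence, fix tangent categories $\C, \D$; I must show $\cat{TANG}(\C,\D) \to \W\text-\cat{ACT}_\mathrm{t}((\C,\ast_\C),(\D,\ast_\D))$ is an equivalence of categories. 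Faithfulness is immediate since both a tangent transformation and an actegory $2$-cell are just certain natural transformations $H \Rightarrow K$ and the forgetful maps to $\cat{CAT}(\C,\D)$ agree. For fullness and essential surjectivity on $1$-cells, the point is to reconstruct, from a map of $\W$-actegories $(H,\mu) \colon (\C,\ast_\C) \to (\D,\ast_\D)$, a tangent functor $H \colon \C \to \D$: one takes $\varphi$ to be (the transpose of) the component $\mu_{W,\thg}$, checks that $H$ preserves tangent limits because $(\thg)\ast_\C X$ does and $\mu$ is invertible, and checks that~\eqref{eq:4} holds --- this is where Leung's theorem does the real work, exactly as in the proof of Theorem~\ref{thm:1}: the coherence of $\mu$ with $\alpha,\lambda$ forces $\varphi$ to satisfy the tangent-functor axioms because every structural map of $\W$ is witnessed by a natural transformation built from the tangent structure.

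The main obstacle I anticipate is precisely this last point: checking that the tangent-functor coherence conditions~\eqref{eq:4} are \emph{equivalent} to (not merely implied by) the $\W$-actegory compatibility conditions on $\mu$, and that the correspondence $(H,\varphi) \leftrightarrow (H,\mu)$ is mutually inverse up to isomorphism. The forward direction ($\varphi \mapsto \mu$) requires knowing the actegory constraints are generated, under $\otimes$ and $\circ$, by the symmetry and the maps $e,m,\ell,c$; the backward direction requires that the components of $\mu$ at all objects of $\W$ are determined by $\mu_{W,\thg}$, which again is Leung's generation result. Both directions hinge on having a sufficiently explicit grip on the free tangent category $\W$ — its objects $W_{n_1}\otimes\dots\otimes W_{n_k}$ and the fact, recorded in the sketch proof of Theorem~\ref{thm:1} via~\cite[Proposition~9.1]{Leung2016The-free}, that every morphism of $\W$ arises from the tangent structure — so I expect the proof to reduce to carefully bookkeeping these coherences rather than to any genuinely new idea beyond Corollary~\ref{cor:1}.
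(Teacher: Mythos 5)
Your proposal is correct, and its overall skeleton coincides with the paper's: $\Gamma$ is defined on objects by transposing the strong monoidal functor of Corollary~\ref{cor:1}, essential surjectivity is read off from that corollary, and the local equivalence is obtained by reconstructing $\varphi$ from the single component $\mu_{W,\thg}$, deducing the diagrams~\eqref{eq:4} from the actegory axioms, and getting preservation of tangent limits from invertibility of the $\mu_{W_n}$. The one place you genuinely diverge is the construction of $\Gamma$ on $1$- and $2$-cells, which you rightly flag as the crux. You propose to define $\mu_V$ at each $V = W_{n_1}\otimes\dots\otimes W_{n_k}$ as an explicit composite of instances of $\varphi$ and then verify naturality in $V$ and compatibility with $\alpha,\lambda$ by induction over Leung's generators; this works, but leaves real bookkeeping (well-definedness across different decompositions of $V$, and the propagation of commutation with $e,m,\ell,c,p$ to every derived transformation). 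The paper sidesteps all of this with one further application of Theorem~\ref{thm:1}: given a tangent functor $F \colon \C \to \D$ it forms the category $\K$ of triples $(A,B,\alpha\colon FA\cong BF)$, equips $\K$ with a tangent structure using $\varphi$, and obtains the entire coherent family $\mu$ at once as the third component of the essentially unique tangent functor $\W \to \K$ sending $\mathbb N$ to $(\id_\C,\id_\D,\id_F)$; the $2$-cell case is the same argument with $\D$ replaced by $\D^\atwo$. Both routes rest on Leung's coherence theorem in the end, but the paper packages the inductive verification you anticipate into the universal property of $\W$, whereas your route would have to carry it out by hand.
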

\begin{proof}
  We define a $2$-functor
  $\Gamma \colon \cat{TANG} \rightarrow \W\text-\cat{ACT}_\mathrm{t}$
  as follows. First, given a tangent category $\C$, the strong
  monoidal $\Phi \colon \W \rightarrow [\C, \C]$ of
  Theorem~\ref{thm:6} transposes to a $\W$-action
  $\ast \colon \W \times \C \rightarrow \C$ which preserves tangent
  limits in its first variable. Next, given a map of tangent
  categories $F \colon \C \rightarrow \D$, consider
  (following~\cite{Kelly1974Coherence}) the category $\K$ whose
  objects are triples
  $(A \in [\C, \C], B \in [\D, \D], \alpha \colon FA \cong BF)$ and
  whose morphisms are compatible pairs of natural transformations.
  $\K$ bears a tangent structure with ``tangent bundle'' functor
  \begin{equation*}
    (A,B,\alpha) \qquad \mapsto \qquad (TA, TB, FTA
    \xrightarrow{\varphi A} TFA
    \xrightarrow{T\alpha} TBF)\rlap{ ,}
  \end{equation*}
  with remaining data inherited from the pointwise tangent
  structures on $[\C, \C]$ and $[\D, \D]$, and with axioms following
  from those for the tangent functor $F$. By initiality of $\W$, there
  is an essentially-unique tangent functor
  $H \colon \W \rightarrow \K$ sending $\mathbb N$ to
  $(\id_\C, \id_\D, \id_F)$. Since the projections from $\K$ to
  $[\C, \C]$ and $[\D, \D]$ are clearly tangent functors, this $H$
  sends each $V \in \W$ to a triple
  \begin{equation*}
    (V \ast (\thg) \in [\C, \C],\, V \ast (\thg) \in [\D, \D],\, \mu_V \colon F(V \ast \thg) \rightarrow V \ast F(\thg) \, )
  \end{equation*}
  whose third component gives the maps necessary to make $F$ into a
  morphism of $\M$-actegories $\C \rightarrow \D$. This defines
  $\Gamma$ on morphisms; the definition on $2$-cells now follows on
  replacing $\D$ by $\D^\atwo$ in the preceding construction.

  It is immediate from Theorem~\ref{thm:6} that $\Gamma$ is
  essentially surjective on objects, and so we need only show that it
  is fully faithful on $1$- and $2$-cells. So let $\C$ and $\D$ be
  tangent categories and
  $(F, \mu) \colon \Gamma \C \rightarrow \Gamma \D$ a map of the
  corresponding $\W$-actegories. The maps $\mu_{W, \thg}$ constitute a
  natural isomorphism $F(W \ast \thg) \Rightarrow W \ast F(\thg)$
  which, since $W \ast (\thg) \cong T$ in both domain and codomain,
  determines and is determined by one
  $\varphi \colon FT \Rightarrow TF$. The axioms for a map of
  $\W$-actegories now imply commutativity of the
  diagrams~\eqref{eq:4}, and so $(F, \varphi)$ will be a tangent
  functor so long as $F$ preserves tangent limits. For the pullbacks,
  we consider the diagram
  \begin{equation*}
    \cd{
      {F(W_n \ast X)} \ar[r] \ar[d]_{\mu_{W_n}} &
      {FTX \times_{FX} \dots \times_{FX} FTX} \ar[d]^{\varphi \times_{FX} \dots \times_{FX} \varphi} \\
      {W_n \ast FX} \ar[r]_-{} &
      {TFX \times_{FX} \dots \times_{FX} TFX}\rlap{ .}
    }
  \end{equation*}
  with top edge induced by the maps
  $F(\pi_i \ast X) \colon F(W_n \ast X) \rightarrow F(W \ast X)
  \cong FTX$ and bottom induced by the maps
  $\pi_i \ast FX \colon W_n \ast FX \rightarrow W \ast FX \cong
  TFX$. The square commutes by naturality of $\mu$, and our
  assumptions means that the top, left and right sides are
  isomorphisms; whence also the bottom. The argument for preservation
  of the equalisers~\eqref{eq:3} is similar, and so $(F, \varphi)$ is
  a map of tangent categories. It is moreover easily unique
  such that $\Gamma(F, \varphi) = (F, \mu)$, so that $\Gamma$ is fully
  faithful on $1$-cells; the argument on $2$-cells is similar on
  replacing $\D$ by $\D^\atwo$.
\end{proof}

\section{Tangent categories as enriched categories}
\label{sec:tang-categ-as}
We now exploit Theorem~\ref{thm:2} in order to exhibit tangent
categories as particular kinds of \emph{enriched category} in the
sense of~\cite{Kelly1982Basic}; more precisely, we construct a base
for enrichment $\E$ such that tangent categories are the same thing as
$\E$-enriched categories admitting \emph{powers} by a certain class of
objects in $\E$; here, we recall that:
\begin{Defn}
  \label{def:5}
  If $\C$ is a category enriched over the
symmetric monoidal base $\V$, then a \emph{power}
(resp.~\emph{copower}) of $X \in \C$ by $V \in \V$ is an object
$V \pitchfork X$ (resp.~$V \cdot X$) of $\C$ together with a $\V$-natural family of
isomorphisms in $\V$ as to the left or right in:
 \begin{equation*}
   \C(Y, V \pitchfork X) \xrightarrow{\cong} \V(V, \C(Y,X)) \qquad \qquad 
   \C(V \cdot X, Y) \xrightarrow{\cong} \V(V, \C(X,Y))\rlap{ .}
 \end{equation*}
 Note that, by $\V$-naturality, such isomorphisms are determined by a
 \emph{unit} map $V \rightarrow \C(V \pitchfork X, X)$ or $V
 \rightarrow \C(X, V \cdot X)$ as appropriate.
\end{Defn}
\newcommand{\pv}[1][\M]{\P{#1}} The characterisation result in
question is our Theorem~\ref{thm:3} below; it will follow from two
basic arguments in the theory of enriched categories. The first, due
to Richard Wood, identifies actegories over a small symmetric $\M$
with $\pv$-enriched categories admitting powers by representables;
here, $\pv$ is the category $[\M, \cat{Set}]$ of presheaves on
$\M^\mathrm{op}$ under Day's \emph{convolution} monoidal structure:
\begin{Defn}
  \label{def:11}
  Let $\M$ be small symmetric monoidal. The \emph{convolution}
  monoidal structure on $[\M, \cat{Set}]$ is the symmetric monoidal
  closed structure whose unit object is $yI = \V(I, \thg)$, whose
  binary tensor product and internal hom are as displayed below, and
  whose coherence data are given as in~\cite{Day1970On-closed}:
  \begin{equation}\label{eq:15}
    \begin{aligned}
      (F \otimes G)(X)
      &= \textstyle \int^{M,N \in \M} \M(M \otimes N,
       X) \times FM \times GN \\ 
       \qquad [F,G](X) &= \textstyle\int_{M \in \M} [FM, G(X
       \otimes M)]\text{ .}
    \end{aligned}
  \end{equation}
\end{Defn}
The first step in proving Wood's result uses his characterisation of
general $\pv$-enriched categories.
\begin{Lemma}[Wood]
  \label{lem:1}
  Let $\M$ be a small symmetric monoidal category. To give a
  $\pv$-enriched category $\C$ is equally to give:
  \begin{itemize}
  \item A set $\ob \C$ of objects;
  \item For each $x, y \in \ob \C$, a functor $\C(x,y) \colon \M
    \rightarrow \cat{Set}$;
  \item For each $x \in \ob \C$, an identity element $\id_x \in
    \C(x,x)(I)$;
  \item For each $x,y,z \in \ob \C$, a family of composition functions
    \begin{equation*}
      \C(x,y)(M) \times \C(y, z)(N) \rightarrow \C(x,z)(M \otimes N)
    \end{equation*}
    natural in $M,N \in \M$,
  \end{itemize}
  subject to three axioms expressing associativity
  and unitality of composition.
\end{Lemma}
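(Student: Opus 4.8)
The plan is to unwind the definition of a $\pv$-enriched category using the explicit description of the convolution monoidal structure on $\pv = [\M,\Set]$ from Definition~\ref{def:11}, and to show that each piece of $\pv$-enriched structure translates, via the (co)Yoneda lemma, into exactly the data listed in the statement. An $\pv$-enriched category consists of a set $\ob\C$ of objects; for each pair $x,y$ a hom-object $\C(x,y) \in \pv$, i.e.\ a presheaf $\C(x,y) \colon \M \rightarrow \Set$; for each triple $x,y,z$ a morphism $M_{xyz} \colon \C(y,z) \otimes \C(x,y) \rightarrow \C(x,z)$ in $\pv$; and for each $x$ a morphism $j_x \colon yI \rightarrow \C(x,x)$ in $\pv$; all subject to the usual associativity and two unit axioms. (I shall silently transpose the order of arguments in composition so as to match the left-to-right convention in the statement; this is harmless since $\M$, and hence the convolution structure, is symmetric.)

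First I would handle the identity element. A natural transformation $j_x \colon yI = \M(I,\thg) \Rightarrow \C(x,x)$ is, by the Yoneda lemma, the same as an element of $\C(x,x)(I)$; call it $\id_x$. Next, the composition morphism. By the defining coend formula~\eqref{eq:15}, a natural transformation
\[
  \textstyle\int^{M,N} \M(M \otimes N, \thg) \times \C(x,y)(M) \times \C(y,z)(N) \;\Longrightarrow\; \C(x,z)
\]
is, by the universal property of the coend (equivalently, by the co-Yoneda lemma applied twice), the same as a family of maps
\[
  \C(x,y)(M) \times \C(y,z)(N) \longrightarrow \C(x,z)(M \otimes N)
\]
natural in $M, N \in \M$. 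This is precisely the family of composition morphisms demanded in the statement. Thus the underlying data of an $\pv$-enriched category match the data listed; the correspondence is clearly bijective at the level of data, so it remains only to match the axioms.

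For the axioms, I would verify that the associativity axiom for $\pv$-enriched composition---an equality of two transformations $\C(x,y) \otimes \C(y,z) \otimes \C(z,w) \rightarrow \C(x,w)$---unwinds, after applying the co-Yoneda lemma three times, to the evident associativity square for the family $\C(x,y)(M) \times \C(y,z)(N) \times \C(z,w)(P) \rightarrow \C(x,w)(M \otimes N \otimes P)$, using the coherence isomorphisms of $\M$; and similarly that the two $\pv$-enriched unit axioms, which involve the left and right unitors of the convolution structure and the map $j$, unwind to the statement that $\id_x$ and $\id_y$ act as two-sided units for the composition family. The main obstacle---though it is bookkeeping rather than a genuine difficulty---is keeping track of the coherence data of the convolution structure (the associator and unitors, as specified in~\cite{Day1970On-closed}) through these unwindings, and checking that nothing beyond the stated three axioms is needed. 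Since the convolution structure is, by construction, the one for which the Yoneda embedding $y \colon \M \rightarrow \pv$ is strong monoidal and which represents "promonoidal" structure, these coherence comparisons reduce to the coherence axioms already assumed on $\M$, so no extra conditions appear. This completes the translation in both directions, and inspection shows the two passages are mutually inverse.
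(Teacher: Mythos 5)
Your proposal is correct and follows exactly the route the paper takes: the paper's proof is a citation to Wood plus precisely your two key observations, namely that Yoneda identifies maps $yI \rightarrow F$ with elements of $FI$ and that the coend formula for the convolution tensor identifies maps $F \otimes G \rightarrow H$ with natural families $FM \times GN \rightarrow H(M \otimes N)$. Your additional care over the argument-order convention and the unwinding of the three axioms is sound and simply fills in details the paper leaves to the reader.
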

\begin{proof}
  This is~\cite[Proposition~1]{Wood1978b-V-indexed}; the key point is
  to use the Yoneda lemma to deduce that maps
  $yI \rightarrow F$ out of the unit in $\pv$ are in natural bijection
  with elements of $FI$, and that maps
  $h \colon F \otimes G \rightarrow H$ out of a binary tensor product
  are in natural bijection with natural families of maps
 $\bar h_{AB} \colon FM \times GN \rightarrow H(M \otimes N)$.
\end{proof}
The following key result is essentially contained in Chapter~1,~\sec
7~of Wood's Ph.D.~thesis~\cite{Wood1976Indicial}; the proof is simple
enough for us to include here.

\begin{Prop}[Wood]
  \label{prop:1}
  Let $\M$ be a small symmetric monoidal category. There is a
  correspondence, to within isomorphism, between $\M$-actegories and
  $\pv$-categories admitting powers by representables.
\end{Prop}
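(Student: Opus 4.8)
The plan is to set up an explicit passage between the two kinds of structure and check it is an equivalence. Starting from an $\M$-actegory $(\C, \ast, \alpha, \lambda)$, I would build a $\pv$-category using Wood's presentation (Lemma~\ref{lem:1}): take the same set of objects, and for $x, y \in \C$ define the hom-presheaf $\C(x,y) \colon \M \rightarrow \Set$ by $\C(x,y)(M) = \C_0(M \ast x, y)$, where $\C_0$ denotes the underlying ordinary category. The identity element in $\C(x,x)(I)$ is $\lambda_x \colon I \ast x \rightarrow x$, and the composition maps $\C(x,y)(M) \times \C(y,z)(N) \rightarrow \C(x,z)(M \otimes N)$ send $(f, g)$ to the composite $(M \otimes N) \ast x \xrightarrow{\alpha} M \ast (N \ast x) \xrightarrow{M \ast (\text{transpose of } f)} \cdots \rightarrow z$; more precisely one forms $g \circ (N \ast(\thg)) $ applied to $f$, reassociated. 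Associativity and unitality of this composition follow from the pentagon and triangle axioms for the actegory together with functoriality of $\ast$ — these are the ``three axioms'' of Lemma~\ref{lem:1}, so this data genuinely yields a $\pv$-category, which I will call $\tilde\C$.

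Next I would show $\tilde\C$ admits powers by representables: for $M \in \M$ and $x \in \C$, I claim $yM \pitchfork x$ is computed as $M \ast x$. By Definition~\ref{def:5} a power is determined by a unit map $yM \rightarrow \tilde\C(M \ast x, x)$ in $\pv$, equivalently by an element of $\tilde\C(M \ast x, x)(M) = \C_0(M \ast (M \ast x), x)$ — wait, that is not quite right; rather, by the Yoneda lemma a map $yM \rightarrow \tilde\C(yM\pitchfork x, x)$ corresponds to an element of $\tilde\C(yM \pitchfork x, x)(M)$, and taking $yM \pitchfork x = M \ast x$ this is $\C_0(M \ast (M \ast x), x)$; the correct unit is obtained differently — one checks directly that $\pv(yM, \tilde\C(y, x)) \cong \tilde\C(y,x)(M) = \C_0(M \ast y, x)$ is $\pv$-naturally isomorphic to $\tilde\C(y, M \ast x) = \C_0(M \ast y, x)$ — actually these are literally equal, which is exactly why the power exists, with unit the transpose of the identity. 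Conversely, given a $\pv$-category $\D$ with chosen powers by representables, I would define an $\M$-action on the underlying ordinary category $\D_0$ by $M \ast x \defeq yM \pitchfork x$, with the associativity and unit constraints $\alpha, \lambda$ induced from the canonical isomorphisms $y(M \otimes N) \cong yM \otimes yN$ and $yI = $ the unit of $\pv$, using that iterated powers compose: $(yM \otimes yN) \pitchfork x \cong yM \pitchfork (yN \pitchfork x)$. The pentagon and triangle for this action follow from coherence of the power isomorphisms, which in turn rests on the coherence of Day convolution recorded in Definition~\ref{def:11}.

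Finally I would verify that these two assignments are mutually inverse to within isomorphism, and — since the statement is only a correspondence ``to within isomorphism'' of objects — this suffices; I need not track morphisms. Going actegory $\to$ $\pv$-category $\to$ actegory returns $M \ast x = yM \pitchfork x = M \ast x$ on the nose on underlying data, with the constraints matching by construction. Going $\pv$-category $\to$ actegory $\to$ $\pv$-category reconstructs the hom-presheaf as $M \mapsto \D_0(yM \pitchfork x, y) \cong \pv(yM, \D(x,y)) \cong \D(x,y)(M)$ by the defining property of powers and Yoneda, and one checks this isomorphism respects identities and composition, so the reconstructed $\pv$-category is isomorphic to $\D$.

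I expect the main obstacle to be the bookkeeping in showing that the composition morphisms defined on $\tilde\C$ satisfy associativity in the precise form demanded by Lemma~\ref{lem:1}: this requires carefully chasing a diagram built from two instances of the actegory associator $\alpha$ against the pentagon axiom, while keeping straight the mixed variance (the first variable of $\ast$ is where $\otimes$ lives, the second is the ``point''). Once that coherence computation is in hand, the existence of powers by representables is essentially a tautology — it is really just the observation that $\C_0(M \ast y, x)$ can be read either as $\tilde\C(y,x)(M)$ or, after the Yoneda lemma, as the hom into a power — and the verification that the round-trips are identities is then routine.
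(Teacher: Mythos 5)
Your overall strategy---explicit constructions in both directions organised via Lemma~\ref{lem:1}, with the existence of powers reduced to a Yoneda/Day-convolution computation---is the same as the paper's, but there is a genuine error in the variance of your hom-presheaf, and it breaks the statement you are trying to prove. You set $\tilde\C(x,y)(M) = \C_0(M \ast x, y)$, letting $M$ act on the \emph{domain}. Since the Day internal hom satisfies $[y_M, F](N) \cong F(N \otimes M)$, the power condition of Definition~\ref{def:5}, namely $\tilde\C(y,\, y_M \pitchfork x) \cong [y_M, \tilde\C(y,x)]$ in $\pv$, unwinds under your convention to a natural isomorphism $\C_0(N \ast y,\ y_M \pitchfork x) \cong \C_0(N \ast (M \ast y),\ x)$; taking $N = I$, this asserts that $M \ast (\thg)$ has a right adjoint, which a general actegory does not provide. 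What your convention actually yields is a \emph{copower} by $y_M$: one has $[y_M, \tilde\C(x,y)](N) \cong \C_0(N \ast (M \ast x), y) \cong \tilde\C(M \ast x, y)(N)$, so $y_M \cdot x = M \ast x$. Your own verification already shows the strain: the claimed identity $\tilde\C(y, M \ast x) = \C_0(M \ast y, x)$ is false (the $I$-component of the left side is $\C_0(y, M \ast x)$), and you also silently replace the internal hom $[y_M, \thg]$ demanded by the definition of power with the mere hom-set $\pv(y_M, \thg)$, which records only the $I$-component of the required isomorphism of presheaves. A further symptom is that your composite of $f \colon M \ast x \to y$ and $g \colon N \ast y \to z$ naturally lands in $\tilde\C(x,z)(N \otimes M)$ rather than $\tilde\C(x,z)(M \otimes N)$, forcing an appeal to the symmetry of $\M$ that the correct convention never needs.

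The repair is simply to flip the variance, as the paper does: define $\tilde\C(x,y)(M) \defeq \C_0(x, M \ast y)$, with unit element $\lambda_x^{-1} \in \C_0(x, I \ast x)$ and composition sending $f \colon x \to M \ast y$, $g \colon y \to N \ast z$ to $x \xrightarrow{f} M \ast y \xrightarrow{M \ast g} M \ast (N \ast z) \cong (M \otimes N) \ast z$. Then $[y_M, \tilde\C(y,x)](N) \cong \C_0(y, (N \otimes M) \ast x) \cong \tilde\C(y, M \ast x)(N)$, so $y_M \pitchfork x = M \ast x$ genuinely is a power. With that change, the rest of your outline---the converse assignment $M \ast x \defeq y_M \pitchfork x$ with associativity constraints coming from $y_{M \otimes N} \cong y_M \otimes y_N$ and the composition of iterated powers, plus the round-trip checks---matches the paper's proof and goes through.
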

\begin{proof}
  First let $\C$ be a $\pv$-category admitting powers by
  representables. As usual, we write $\C_0$ for the underlying
  ordinary category of $\C$, whose objects are those of $\C$ and whose
  hom-sets are $\C_0(x,y) = \C(x,y)(I)$. We endow $\C_0$ with an
  $\M$-action by taking $M \ast X \defeq y_M \pitchfork X$.
  Functoriality of $\ast$ follows by the functoriality of enriched
  limits; the associativity constraints are given by
  \begin{equation*}
    y_{M \otimes N} \pitchfork X \cong (y_M \otimes y_N) \pitchfork X \cong y_V \pitchfork (y_W \pitchfork X)
  \end{equation*}
  where the first isomorphism comes from the definition of the
  convolution monoidal structure, and the second is the associativity
  of iterated powers~\cite[Equation 3.18]{Kelly1982Basic}; and the unit
  constraints are analogous. This gives an assignation
  $\C \mapsto (\C_0, y_{(\thg)} \pitchfork (\thg))$ from
  $\pv$-categories admitting powers by representables to
  $\M$-actegories.

  Conversely, if $(\C_0, \ast)$ is an $\M$-actegory, then we 
  define a $\pv$-category $\C$ with objects those of $\C_0$, with
  hom-objects $\C(X,Y) \colon \M \rightarrow \cat{Set}$ given by
  $\C(X,Y)(M) = \C_0(X, M \ast Y)$, with unit elements
  $\lambda_X \in \C(X,X)(I) = \C(X,I \otimes X)$, and with composition
  maps $\C(X,Y)(M) \times \C(Y,Z)(N) \rightarrow \C(X,Z)(M \otimes N)$
  given by sending $f \colon X \rightarrow M \ast Y$ and
  $g \colon Y \rightarrow N \ast Z$ to the composite
  \begin{equation*}
    X \xrightarrow{f} M \ast Y \xrightarrow{M \ast g} M \ast (N \ast Z) \xrightarrow{\cong} (M \otimes N) \ast Z\rlap{ .}
  \end{equation*}
  It is straightforward to check that this $\C$ has powers by
  representables given by taking $y_V \pitchfork X \defeq V \ast
  X$. Finally, it is easy to see that the preceding two constructions
  are inverse to within an isomorphism.
\end{proof}

In fact, by using results of~\cite{Gordon1997Enrichment}, this
correspondence can be enhanced to an equivalence of $2$-categories.
Let us write $\pv\text-\cat{CAT}_{\pitchfork}$ for the locally full
sub-$2$-category of $\pv\text-\cat{CAT}$ whose objects are
$\pv$-categories admitting powers by representables, and whose
$1$-cells are $\pv$-functors preserving such powers.
\begin{Prop}
  \label{prop:2}
  Let $\M$ be small symmetric monoidal. The correspondence of
  Proposition~\ref{prop:1} underlies an
  equivalence of $2$-categories
  $\M\text-\cat{ACT} \simeq \pv\text-\cat{CAT}_{\pitchfork}$.
\end{Prop}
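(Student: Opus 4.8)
The plan is to extend the object-level correspondence of Proposition~\ref{prop:1} to $1$- and $2$-cells, and then to observe that the resulting assignments are $2$-functorial and locally an equivalence of hom-categories; this is exactly an instance of the ``enrichment through variation'' machinery of~\cite{Gordon1997Enrichment}, and we indicate how it plays out. For the $1$-cells: given $\M$-actegories $(\C_0, \ast)$, $(\D_0, \ast)$ with associated $\pv$-categories $\C$, $\D$ as in Proposition~\ref{prop:1}, and a map of $\M$-actegories $(F, \mu) \colon (\C_0, \ast) \rightarrow (\D_0, \ast)$, we build a $\pv$-functor $\bar F \colon \C \rightarrow \D$ acting as $F$ on objects and whose action $\C(X,Y)(M) \rightarrow \D(\bar F X, \bar F Y)(M)$ on hom-presheaves sends $f \colon X \rightarrow M \ast Y$ to $\mu_{M,Y} \circ Ff \colon FX \rightarrow M \ast FY$. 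Naturality in $M \in \M$ is the $\M$-naturality of $\mu$; compatibility of $\mu$ with the unit constraint $\lambda$ yields preservation of identity elements; and compatibility of $\mu$ with the associativity constraint $\alpha$, together with bifunctoriality of $\ast$, yields preservation of composition. Conversely, a $\pv$-functor $G \colon \C \rightarrow \D$ that preserves powers by representables has underlying ordinary functor $G_0 \colon \C_0 \rightarrow \D_0$, and the comparison isomorphisms $G_0(M \ast X) = G(y_M \pitchfork X) \xrightarrow{\cong} y_M \pitchfork G_0 X = M \ast G_0 X$ witnessing preservation of these powers supply the data $\mu$; the coherence of $\mu$ with $\alpha$ and $\lambda$ follows from the $\pv$-functoriality of $G$ together with the way the convolution structure~\eqref{eq:15} entered the construction of the power structure in Proposition~\ref{prop:1}.

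The key point reconciling the two sides is that $\bar F$ preserves powers by representables \emph{precisely because} $\mu$ is invertible: unwinding the definitions, the canonical comparison map $\bar F(y_M \pitchfork X) \rightarrow y_M \pitchfork \bar F X$ is exactly $\mu_{M,X}$. Hence these two constructions are mutually inverse to within isomorphism and identify the maps of $\M$-actegories with the power-preserving $\pv$-functors. The $2$-cells match in the same manner: a $2$-cell $(F, \mu) \Rightarrow (F', \mu')$ is a natural transformation $\beta \colon F \Rightarrow F'$ satisfying $\mu'_{M,X} \circ \beta_{M \ast X} = (M \ast \beta_X) \circ \mu_{M,X}$, while a $\pv$-natural transformation $\bar F \Rightarrow \bar F'$ is precisely a family $\beta_X \in \D(\bar F X, \bar F' X)(I) = \D_0(F X, F' X)$ subject to the $\pv$-naturality condition; evaluating that condition on $\id_Y \in \C(Y,Y)(I)$ and using that the hom-presheaves are $\C_0(X, M \ast Y)$ recovers the displayed compatibility, and conversely the $\pv$-naturality condition follows from it by naturality of the composition maps of $\C$ and $\D$.

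It remains to note that the two directions are $2$-functorial: identities go to identities, and since the structure isomorphism of a composite of actegory maps is the evident composite while composition of $\pv$-functors is strict, composites go to composites. With Proposition~\ref{prop:1} supplying essential surjectivity on objects, and the previous two paragraphs supplying an equivalence between the hom-categories $\M\text-\cat{ACT}((\C_0,\ast),(\D_0,\ast))$ and $\pv\text-\cat{CAT}_{\pitchfork}(\C, \D)$, we conclude that the resulting $2$-functor is the desired equivalence $\M\text-\cat{ACT} \simeq \pv\text-\cat{CAT}_{\pitchfork}$. The only delicate part is the bookkeeping in the second paragraph: one must check that the associativity and unit isomorphisms of the Day convolution structure~\eqref{eq:15} are exactly those required for the actegory coherence axioms to transport faithfully in both directions, which is where~\cite{Day1970On-closed} and the calculus of iterated powers~\cite[Equation~3.18]{Kelly1982Basic} get used; this is precisely the substance of the cited result of~\cite{Gordon1997Enrichment}, and it is the main thing to verify.
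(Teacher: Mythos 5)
Your proposal is correct and takes essentially the same route as the paper: where the paper simply cites \cite{Gordon1997Enrichment} (Section~3 for the $2$-functor and Theorem~3.4 for $2$-fully-faithfulness), you unpack exactly that correspondence on $1$- and $2$-cells by hand. The details you supply---that the canonical power-comparison map is precisely $\mu_{M,X}$, so that power-preservation corresponds to invertibility of $\mu$, and that $2$-cells are identified via elements of the hom-presheaf at the unit $I$---are the correct substance of the cited result.
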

\begin{proof}
  In~\cite[\sec 3]{Gordon1997Enrichment}, the assignation
  $\C \mapsto (\C_0, y_{(\thg)} \pitchfork (\thg))$ of the preceding
  proposition is made into the action on objects of a $2$-functor
  $\pv\text-\cat{CAT}_{\pitchfork} \rightarrow \M\text-\cat{ACT}$. The
  preceding Proposition shows that this $2$-functor is essentially
  surjective on objects, and it is $2$-fully faithful
  by~\cite[Theorem~3.4]{Gordon1997Enrichment}.
\end{proof}

In particular, with $\W = (\W, \otimes, \mathbb{N})$ given as in the preceding sections,
this proposition identifies $\W$-actegories with $\pv[\W]$-categories
admitting powers by representables. What it does not yet capture are the
limit-preservation properties required of a \emph{tangent}
$\W$-actegory; for this, we require
a second basic result of enriched category theory, concerning enrichment over a
\emph{monoidally reflective} subcategory.

\begin{Defn}
  \label{def:4}
  A \emph{symmetric monoidal reflection} is an adjunction
  \begin{equation}\label{eq:8}
    \cd{
      {(\V', \otimes', I')} \ar@<-4.5pt>[r]_-{J} \ar@{}[r]|-{\bot} &
      {(\V, \otimes, I)} \ar@<-4.5pt>[l]_-{L}
    }
  \end{equation}
  in the $2$-category $\cat{SMC}$ of symmetric monoidal categories, symmetric
  (lax) monoidal functors and monoidal transformations for which $J$
  is the inclusion of a full, replete subcategory $\V' \subseteq \V$.
  We may also say that $\V'$ is \emph{monoidally reflective} in $\V$.
\end{Defn}
Any symmetric monoidal functor $F \colon \V_1 \rightarrow \V_2$
induces a ``change of base'' \mbox{$2$-functor}
$F_\ast \colon \V_1\text-\cat{CAT} \rightarrow \V_2\text-\cat{CAT}$
which sends a $\V_1$-category $\A$ to the $\V_2$-category $F_\ast \A$
with the same objects and with $(F_\ast \A)(x,y) = F(\A(x,y))$.
Similarly, any symmetric monoidal transformation
$\alpha \colon F \Rightarrow G$ between monoidal symmetric functors
induces a $2$-natural transformation
$\alpha_\ast \colon F_\ast \Rightarrow G_\ast$ between the
corresponding change of base $2$-functors. The assignations
$F \mapsto F_\ast$ and $\alpha \mapsto \alpha_\ast$ are evidently
$2$-functorial, and so any monoidal reflection~\eqref{eq:8} gives rise
to a reflection of $2$-categories
$J_\ast \colon \V'\text-\cat{CAT} \leftrightarrows \V\text-\cat{CAT}
\colon L_\ast$. It follows that:
\begin{Lemma}
  \label{lem:2}
  For any symmetric monoidal reflection as in~\eqref{eq:8}, the $2$-functor $J_\ast
  \colon \V'\text-\cat{CAT} \rightarrow \V\text-\cat{CAT}$
  induces a $2$-equivalence between $\V'\text-\cat{CAT}$ and the full and
  locally full sub-$2$-category of $\V\text-\cat{CAT}$ on those
  $\V$-categories with hom-objects in~$\V'$.
\end{Lemma}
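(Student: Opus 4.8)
The plan is to deduce the statement from the $2$-functoriality of change of base, together with the standard fact that a right adjoint with invertible counit is fully faithful.

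First I would observe that, since $J$ is a full replete inclusion, the counit $\epsilon \colon LJ \Rightarrow 1_{\V'}$ of~\eqref{eq:8} is a natural isomorphism, and that, being the counit of an adjunction in $\cat{SMC}$, it is invertible there (the pointwise inverse of a monoidal natural isomorphism is automatically monoidal). As change of base is $2$-functorial, the adjunction $L \dashv J$ in $\cat{SMC}$ then induces a $2$-adjunction $L_\ast \dashv J_\ast$ whose counit $\epsilon_\ast \colon L_\ast J_\ast \Rightarrow 1$ is the image of $\epsilon$ and so again invertible. A right $2$-adjoint with invertible counit is $2$-fully faithful, so each functor $\V'\text-\cat{CAT}(\A,\B) \rightarrow \V\text-\cat{CAT}(J_\ast\A, J_\ast\B)$ is an isomorphism of categories, and $J_\ast$ therefore corestricts to a $2$-equivalence of $\V'\text-\cat{CAT}$ onto the full and locally full sub-$2$-category of $\V\text-\cat{CAT}$ spanned by the objects in its essential image.

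It would then remain only to check that this essential image is precisely the class of $\V$-categories all of whose hom-objects lie in $\V'$. One inclusion is clear: the hom-objects of $J_\ast\A$ are the $J(\A(x,y))$, which lie in $\V'$, and $\V'$ is replete in $\V$. For the converse, given a $\V$-category $\B$ with all hom-objects in $\V'$, I would examine the component at $\B$ of the unit $\eta_\ast \colon 1 \Rightarrow J_\ast L_\ast$: it is a $\V$-functor which is the identity on objects and acts on hom-objects by the components $\eta_{\B(x,y)} \colon \B(x,y) \rightarrow JL\B(x,y)$ of the unit of~\eqref{eq:8}; since $\V'$ is reflective in $\V$ and $\B(x,y) \in \V'$, each of these is invertible, so $(\eta_\ast)_\B \colon \B \rightarrow J_\ast L_\ast \B$ is an isomorphism of $\V$-categories, exhibiting $\B$ in the essential image.

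The only point calling for real (if still routine) care is this identification of the essential image; everything else is formal, given the $2$-functoriality of change of base recorded above. As an alternative to the abstract argument, one could instead re-enrich a given $\V$-category with hom-objects in $\V'$ directly over $\V'$, transporting its composition and identities along $\eta$ and using that any morphism $A \otimes B \rightarrow C$ of $\V$ with $C \in \V'$ factors uniquely through $\eta_{A \otimes B} \colon A \otimes B \rightarrow L(A \otimes B) \cong A \otimes' B$; checking the $\V'$-category axioms and the compatibility with $J_\ast$ is then a direct but tedious verification.
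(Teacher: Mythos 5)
Your argument is correct and follows essentially the same route as the paper, which states Lemma~\ref{lem:2} as an immediate consequence of the $2$-functoriality of change of base and the induced reflection $L_\ast \dashv J_\ast$; you have simply filled in the details (invertibility of the counit giving $2$-full-faithfulness of $J_\ast$, and the unit components $\eta_{\B(x,y)}$ identifying the essential image via repleteness of $\V'$). No gaps.
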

To obtain symmetric monoidal reflections, we use Day's
\emph{reflection theorem}:

\begin{Prop}[Day]
  \label{prop:3}
  Let $(\V, \otimes, I)$ be symmetric monoidal closed, let
  $J \colon \V' \leftrightarrows \V \colon L$ exhibit $\V'$
  as a full, replete reflective subcategory of $\V$, and suppose that we have:
  \begin{equation}\label{eq:10}
    A \in \A \ \text{ and }\  V \in \V \qquad \Longrightarrow \qquad [V,A] \in \A\rlap{ .}
  \end{equation}
  Then $\V'$ is symmetric monoidal on taking $I' = LI$ and
  $A \otimes' B = L(IA \otimes IB)$, and this structures makes $\V'$
  monoidally reflective in $\V$. Furthermore, $\V'$ is closed monoidal
  with internal hom inherited from $\V$.
\end{Prop}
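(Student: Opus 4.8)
The result is Day's reflection theorem, and the plan is to reduce everything to a single \emph{absorption lemma}. Write $\eta \colon \id_\V \Rightarrow JL$ for the reflection unit and, for brevity, regard $\V'$ as a full replete subcategory of $\V$ (so that $J$ is suppressed). The lemma asserts that for all $A, B \in \V$ the map $L(\eta_A \otimes B) \colon L(A \otimes B) \to L(LA \otimes B)$ is invertible. I would prove it by the Yoneda lemma: for every $C \in \V'$ there are isomorphisms
\begin{align*}
  \V(L(LA \otimes B), C) &\cong \V(LA \otimes B, C) \cong \V(LA, [B,C]) \\
  &\cong \V(A, [B,C]) \cong \V(A \otimes B, C) \cong \V(L(A \otimes B), C),
\end{align*}
in which the outer two come from the reflection, the second and fourth from closedness of $\V$, and the middle one is precomposition with $\eta_A$---a bijection precisely because $[B,C] \in \V'$ by the exponential-ideal hypothesis \eqref{eq:10}. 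Since the composite isomorphism is precomposition with $L(\eta_A \otimes B)$, that map is invertible. A short chase with the naturality square of $\eta$ then shows that $L$ carries any map it inverts to one still inverted after tensoring on either side; in particular $L(\eta_A \otimes \eta_B) \colon L(A \otimes B) \to L(LA \otimes LB)$ is invertible for all $A, B$.

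With this I would set $I' \defeq LI$ and $A \otimes' B \defeq L(A \otimes B)$ for $A, B \in \V'$, and transport the symmetric monoidal structure of $\V$ across $L$. For instance the associator is the composite
\begin{align*}
  (A \otimes' B) \otimes' C = L(L(A\otimes B) \otimes C) &\xleftarrow{\sim} L((A\otimes B)\otimes C) \xrightarrow{La} L(A \otimes (B \otimes C)) \\
  &\xrightarrow{\sim} L(A \otimes L(B\otimes C)) = A \otimes' (B \otimes' C),
\end{align*}
where $a$ is the associator of $\V$ and the outer maps are absorption-lemma isomorphisms; the unit and symmetry constraints are obtained in the same way (for the unitors one also uses that $LA \cong A$ for $A \in \V'$). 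The pentagon, triangle and hexagon identities for $(\V', \otimes', I')$ then follow from their counterparts in $\V$ by naturality of $\eta$ and functoriality of $L$ and $\otimes$.

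It remains to see that $\V'$ is monoidally reflective and closed. I would equip $J$ with the lax structure maps $\eta_I \colon I \to JI'$ and $\eta_{A \otimes B} \colon JA \otimes JB \to J(A \otimes' B)$ and check that $J$ is thereby symmetric lax monoidal. Taking mates then yields an oplax monoidal structure on $L$ whose comparison maps are the isomorphisms $L(\eta_A \otimes \eta_B)$, so $L$ is in fact strong monoidal; and since $\eta$ and the invertible counit $LJ \Rightarrow \id_{\V'}$ are monoidal transformations, $L \dashv J$ is an adjunction in $\cat{SMC}$---that is, \eqref{eq:8} is a symmetric monoidal reflection. For closure, given $B, C \in \V'$ the internal hom $[B,C]$ of $\V$ lies in $\V'$ by \eqref{eq:10}, and for every $A \in \V'$ there are isomorphisms
\begin{align*}
  \V'(A, [B,C]) = \V(A, [B,C]) &\cong \V(A \otimes B, C) \\
  &\cong \V'(L(A \otimes B), C) = \V'(A \otimes' B, C),
\end{align*}
natural in $A$, exhibiting $[B,C]$ as an internal hom for $\otimes'$; a routine further check turns this into a genuine closed structure.

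The only conceptually substantive step is the absorption lemma, and it is exactly there that both hypotheses of the proposition---closedness of $\V$ and the exponential-ideal condition \eqref{eq:10}---are used. The bulk of the work in a complete proof lies elsewhere, in the bookkeeping: assembling the associativity, unit and symmetry constraints of $(\V', \otimes', I')$ from those of $\V$ and checking the monoidal coherence axioms, and confirming that $L \dashv J$ meets the axioms of a monoidal adjunction. This is wholly mechanical, and I would either carry it out directly or, as is customary, refer to Day~\cite{Day1970On-closed} for the details.
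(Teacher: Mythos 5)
Your argument is correct, but it takes a genuinely different route from the one sketched in the paper. You give (a compressed version of) Day's original proof: establish the absorption lemma that $L(\eta_A \otimes B) \colon L(A \otimes B) \to L(LA \otimes B)$ is invertible---via a Yoneda argument whose middle step is exactly where closedness of $\V$ and the exponential-ideal condition~\eqref{eq:10} enter---and then transport the monoidal structure of $\V$ along $L$, checking coherence by naturality and verifying the monoidal adjunction by taking mates. The paper instead cites Day for the full proof and sketches an alternative via symmetric multicategories: it passes to the underlying symmetric multicategory $\mathbf{V}$, uses closedness and~\eqref{eq:10} to promote the reflection to an adjunction of multicategories $L \colon \mathbf{V} \leftrightarrows \mathbf{V}'$ via the natural isomorphisms $\mathbf{V}'(LA_1, \dots, LA_n; B) \cong \mathbf{V}(A_1, \dots, A_n; IB)$, and then concludes representability of $\mathbf{V}'$ (hence the monoidal structure $A \otimes' B = L(IA \otimes IB)$) from the general fact that left adjoint multifunctors preserve universal multimorphisms. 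The trade-off is clear: your route is elementary and self-contained but defers a nontrivial amount of coherence bookkeeping (the pentagon, hexagon and monoidal-adjunction axioms), which you rightly acknowledge; the multicategory route absorbs all of that bookkeeping into general theory at the cost of invoking the machinery of representable multicategories. Both are complete in substance, and your absorption lemma is correctly located as the one point where both hypotheses of the proposition are used.
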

\begin{proof}
  This is~\cite[Theorem~1.2]{Day1972A-reflection}, and a full proof is
  given there; we sketch an alternative approach via symmetric
  \emph{multicategories}~\cite{Lambek1969Deductive}. Let $\mathbf{V}$
  be the underlying symmetric multicategory of $\V$: so we have
  $\ob \mathbf{V} = \ob \V$ and
  $\mathbf{V}(A_1, \dots, A_n; B) = \V(A_1 \otimes \dots \otimes A_n,
  B)$. Write $I \colon \mathbf{V}' \rightarrow \mathbf{V}$ for the
  full sub-multicategory on those objects from $\V'$. Of course, we
  have natural isomorphisms $\V'(LA, B) \cong \V(A, IB)$, but by
  closedness and~\eqref{eq:10}, there are more general natural
  isomorphisms:
  \begin{equation*}
    \mathbf{V'}(LA_1, \dots, LA_n; B) \cong \mathbf{V}(A_1, \dots, A_n; IB)\rlap{ ,}
  \end{equation*}
  giving an adjunction of symmetric multicategories
  $I \colon \mathbf{V'} \leftrightarrows \mathbf{V} \colon L$. We will
  now be done as long as we can show that $\mathbf{V}'$, like
  $\mathbf{V}$, is representable. Since any left adjoint multifunctor
  preserves universal multimorphisms, we have for any $A, B
  \in \V'$ a universal multimorphism
  \[A,B \xrightarrow{\varepsilon_A^{-1}, \varepsilon_B^{-1}} LIA, LIB
    \xrightarrow{L(IA \otimes IB)} L(IA \otimes IB)\]
  exhibiting $L(IA \otimes IB)$ as the binary tensor of $A$ and $B$ in
  $\mathbf{V}'$; the same argument shows that $LI$ provides a unit object.
\end{proof}

We now use the Day reflection theorem to find a monoidally reflective
subcategory of $\pv[\W]$ which encodes the preservation of limits
required for a tangent $\W$-actegory.
\begin{Prop}
  \label{prop:4}
  The full subcategory $\E \subset \pv[\W]$ on those functors
  $F \colon \W \rightarrow \cat{Set}$ which preserve tangent limits
  (in the sense of sending them to limits in $\cat{Set}$) is 
  monoidally reflective.
\end{Prop}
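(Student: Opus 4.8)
The plan is to apply Day's reflection theorem (Proposition~\ref{prop:3}) to the inclusion of $\E$ into $\pv[\W]$. The ambient category $\pv[\W] = [\W, \cat{Set}]$ is symmetric monoidal closed under the convolution structure of Definition~\ref{def:11}, since $(\W, \otimes, \mathbb N)$ is small symmetric monoidal; so it remains only to verify that (a) $\E$ is a full, replete, reflective subcategory of $\pv[\W]$, and (b) the closure condition~\eqref{eq:10} holds, i.e.\ $[V, A] \in \E$ whenever $A \in \E$ and $V \in \pv[\W]$, where $[\thg,\thg]$ is the convolution internal hom.

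Part (a) is a standard fact about limit sketches. As $\W$ is small, its tangent limit cones --- the cones $T_n X \to TX$ and $T_n X \to X$ together with the equaliser cones of~\eqref{eq:3}, ranging over all $X \in \W$ and all $n$ --- form a set, and $\E$ is precisely the full subcategory of $[\W, \cat{Set}]$ on those functors sending each of these cones to a limit cone in $\cat{Set}$. Writing $y \colon \W^\op \to [\W, \cat{Set}]$ for the Yoneda embedding, $\E$ is therefore the class of functors right orthogonal to the set of canonical comparison maps $\colim_i y_{D_i} \to y_L$ attached to the tangent limit cones $(L \to D_i)_i$; since $[\W, \cat{Set}]$ is locally presentable, an orthogonality class determined by a set of maps is reflective (and indeed locally presentable). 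This establishes (a).

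For part (b), recall from~\eqref{eq:15} that $[V, A](X) = \int_{M \in \W} [VM, A(X \otimes M)]$. Thus $[V, A]$ is obtained from the composite functors $A \circ (\thg \otimes M) \colon \W \to \cat{Set}$ by postcomposing with the hom-functors $[VM, \thg] \colon \cat{Set} \to \cat{Set}$ and then forming the end over $M$. Hom-functors preserve all limits, and an end is a limit, so --- limits commuting with limits --- it suffices to show that each $A \circ (\thg \otimes M)$ preserves tangent limits. As $A \in \E$ does so, this reduces to the claim that $\thg \otimes M \colon \W \to \W$ preserves tangent limits for each $M \in \W$. But applying $\thg \otimes M$ to the tangent limit cone at an object $X$ yields, up to the associativity constraints of $\otimes$, the tangent limit cone at $X \otimes M$ --- for instance $T_n X \otimes M = (W_n \otimes X) \otimes M \cong W_n \otimes (X \otimes M) = T_n(X \otimes M)$, compatibly with the cone projections, and similarly for the equalisers~\eqref{eq:3} --- and the cones at $X \otimes M$ are genuine limit cones, being tangent limits in $\W$. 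Hence $[V, A] \in \E$, Proposition~\ref{prop:3} applies, and in particular $\E$ inherits its closed structure from $\pv[\W]$.

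The only step carrying real content is (b), and within it the observation that $\thg \otimes M$ preserves tangent limits; this, however, is essentially forced, since the tangent bundle functors $T_n$ of $\W$ are themselves of the form $W_n \otimes \thg$, so that $\thg \otimes M$ commutes with each of them up to coherent isomorphism. The rest is the routine bookkeeping of interchanging the end over $M$ with the tangent limits, together with the standard reflectivity of (a).
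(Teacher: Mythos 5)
Your proof is correct and follows essentially the same route as the paper's: reflectivity of $\E$ as an orthogonality class, then verification of the closure condition~\eqref{eq:10}, reduced to the fact that tensoring with a fixed object of $\W$ preserves tangent limits. The only cosmetic difference is that you manipulate the end formula for $[V,A]$ directly, whereas the paper writes $V$ as a colimit of representables and uses continuity of $[\thg, A]$ to reduce to $[y_M, A] \cong A(M \otimes \thg)$ --- these are the same computation.
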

\begin{proof}
  Clearly $\E$ is a full, replete subcategory of $\pv[\W]$, and its
  reflectivity is quite standard; see~\cite{Freyd1972Categories}, for
  example. To show it is monoidally reflective, it thus
  suffices to verify the closure condition~\eqref{eq:10}. So given
  $F \in \pv[\W]$ and $G \in \E$, we must show that $[F,G] \in \E$;
  writing $F$ as a colimit $\colim y_{A_i}$ of representables, we
  have
  $[F,G] \cong [\colim_{i} y_{A_i}, G] \cong \lim_i [y_{A_i}, G]$, and
  since $\E$ is closed under limits in $\pv[\W]$, it now suffices to
  show that $[y_A, G] \in \E$ whenever $G \in \E$. This follows
  because $[y_A, G](\thg) \cong G(A \otimes \thg)$ is the composite of
  $G \colon \W \rightarrow \cat{Set}$ with the map of tangent categories $A \otimes (\thg)
  \colon \W \rightarrow \W$.
\end{proof}
Since each representable in $\pv[\W]$ clearly lies in $\E$, we may
write $\E\text-\cat{CAT}_\pitchfork$ to denote the locally full
sub-$2$-category of $\E\text-\cat{CAT}$ on the $\E$-categories and
$\E$-functors which admit, respectively preserve, powers by
representables. With this notation, we can now give our promised
representation of tangent categories as enriched categories.
\begin{Thm}
  \label{thm:3}
  The $2$-category $\cat{TANG}$ is equivalent to $\E\text-\cat{CAT}_\pitchfork$.
\end{Thm}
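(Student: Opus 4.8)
The plan is to chain together the three equivalences already established and then repackage the limit-preservation condition through the monoidal reflection. By Theorem~\ref{thm:2} we have $\cat{TANG} \simeq \W\text-\cat{ACT}_\mathrm{t}$, and by Proposition~\ref{prop:2} we have $\W\text-\cat{ACT} \simeq \pv[\W]\text-\cat{CAT}_\pitchfork$; so the first task is to pin down which $\pv[\W]$-categories correspond, under this second equivalence, to the \emph{tangent} $\W$-actegories, i.e.\ those for which each functor $(\thg) \ast X \colon \W \to \C_0$ preserves tangent limits. Unwinding the construction in the proof of Proposition~\ref{prop:1}, the action is $M \ast X = y_M \pitchfork X$, so the hom-presheaf of the associated $\pv[\W]$-category is $\C(X,Y)(M) = \C_0(X, M \ast Y) \cong \C_0(X, y_M \pitchfork Y)$. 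The key observation will be that preservation of tangent limits by $(\thg)\ast Y$ is equivalent, via the defining isomorphism of powers, to the assertion that each representable presheaf $\C(X,Y)$ sends tangent limits in $\W$ to limits in $\cat{Set}$---that is, that each hom-object $\C(X,Y)$ lies in the subcategory $\E \subseteq \pv[\W]$ of Proposition~\ref{prop:4}.

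Granting that, Lemma~\ref{lem:2} applied to the symmetric monoidal reflection $J \colon \E \leftrightarrows \pv[\W] \colon L$ of Proposition~\ref{prop:4} gives a $2$-equivalence between $\E\text-\cat{CAT}$ and the full, locally full sub-$2$-category of $\pv[\W]\text-\cat{CAT}$ on those categories with hom-objects in $\E$. Intersecting this with the condition of admitting (and, on $1$-cells, preserving) powers by representables---which is a condition on the $\pv[\W]$-enrichment that transports across the change-of-base $J_\ast$, since $J$ is a full monoidal inclusion and each representable already lies in $\E$---identifies $\E\text-\cat{CAT}_\pitchfork$ with the sub-$2$-category of $\pv[\W]\text-\cat{CAT}_\pitchfork$ on the tangent $\W$-actegories. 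Composing: $\cat{TANG} \simeq \W\text-\cat{ACT}_\mathrm{t} \simeq \pv[\W]\text-\cat{CAT}_\pitchfork \cap (\text{hom-objects in }\E) \simeq \E\text-\cat{CAT}_\pitchfork$, which is the desired statement. I would also want to check that, under this last equivalence, a $\pv[\W]$-functor between such enriched categories preserves powers by representables if and only if the corresponding actegory morphism is a morphism of $\W$-actegories in the strict sense used in Proposition~\ref{prop:2}; but this is exactly the $1$-cell clause of that Proposition, so no new work is needed.

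The main obstacle I anticipate is the first step: carefully verifying that ``$(\thg)\ast Y$ preserves tangent limits'' really does coincide, hom-object by hom-object, with ``$\C(X,Y) \in \E$ for all $X$''. The subtlety is that a tangent limit in $\W$ (a fibre product $T_n$ or an equaliser as in~\eqref{eq:3}) is sent by $(\thg)\ast Y$ to a limit in $\C_0$, and one must see that applying the representable $\C_0(X, \thg)$ to this and using the power adjunction $\C_0(X, y_M \pitchfork Y) \cong \pv[\W](y_M, \C(X,Y)) \cong \C(X,Y)(M)$ (Yoneda) exactly says that $\C(X,Y)$ carries that tangent limit cone to a limit cone in $\cat{Set}$. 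Since $\C_0(X,\thg)$ preserves limits and powers by representables are computed via Yoneda, this should go through cleanly, but it is the one place where the two different notions of ``preserving tangent limits''---one for a functor out of $\W$ into $\C_0$, one for a presheaf on $\W$---have to be reconciled. Everything after that is an assembly of Lemma~\ref{lem:2}, Proposition~\ref{prop:2} and Proposition~\ref{prop:4} with no genuinely new content.
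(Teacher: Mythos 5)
Your proposal is correct and follows essentially the same route as the paper: identify $\E\text-\cat{CAT}_\pitchfork$ inside $\pv[\W]\text-\cat{CAT}_\pitchfork$ via Lemma~\ref{lem:2} (noting that powers by representables transport across the inclusion), then use the Yoneda lemma to match the condition ``each hom-object lies in $\E$'' with the tangent-actegory condition, and compose with Theorem~\ref{thm:2} and Proposition~\ref{prop:2}. The reconciliation step you flag as the main obstacle is exactly the paper's one-line appeal to Yoneda (representables jointly detect and preserve limits), so your argument is complete as stated.
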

\begin{proof}
  By Lemma~\ref{lem:2}, we can identify $\E\text-\cat{CAT}$ with a
  full sub-$2$-category of $\pv[\W]\text-\cat{CAT}$; but since the
  inclusion $\E \rightarrow \pv[\W]$ preserves internal homs, an
  $\E$-category will admit powers by representables \emph{qua}
  $\E$-category just when it does so \emph{qua} $\pv[\W]$-category,
  and so we may identify $\E\text-\cat{CAT}_\pitchfork$ with the full
  sub-$2$-category of $\pv[\W]\text-\cat{CAT}_\pitchfork$ on those
  $\C$ for which each
  $\C(X,Y) \colon \W \rightarrow \cat{Set}$ preserves tangent limits.
  Transporting across the equivalence
  $\pv[\W]\text-\cat{CAT}_\pitchfork \simeq \W\text-\cat{ACT}$ of
  Proposition~\ref{prop:2}, we may thus identify
  $\E\text-\cat{CAT}_\pitchfork$ with the full sub-$2$-category of
  $\W\text-\cat{ACT}$ on those $(\C, \ast)$ for which each 
  \begin{equation*}
    \C(Y,\, (\thg) \ast X) \colon \W \rightarrow \cat{Set}
  \end{equation*}
  preserves tangent limits. By the Yoneda lemma, this is the same as asking
  that each functor $(\thg) \ast X \colon \W \rightarrow \C$
  preserves tangent limits---which is to ask that $(\C, \ast)$ be a
  tangent $\W$-actegory. So
  $\E\text-\cat{CAT}_\pitchfork \simeq \W\text-\cat{ACT}_\mathrm{t}$,
  and now
  composing with the equivalence of Theorem~\ref{thm:2} yields
  the result.
\end{proof}
\begin{Rk}
  \label{rk:2}
  It is not hard to show that, if $\C$ and $\D$ are $\E$-categories
  admitting powers by representables, then a general $\E$-functor (not
  necesssarily preserving such powers) corresponds to a lax tangent
  functor in the sense of Remark~\ref{rk:1}. We will use this fact in
  Section~\ref{sec:an-expl-pres} below.
\end{Rk}

\section{An embedding theorem for tangent categories}
\label{sec:an-embedding-theorem}
We now use the representation of tangent categories as enriched
categories to show that any small tangent category $\C$ has a full
tangent-preserving embedding into a representable tangent category.
This embedding will simply be the Yoneda embedding
$Y \colon \C \rightarrow [\C^\mathrm{op}, \E]$ of $\C$ seen as an
$\E$-enriched category; since a presheaf category always admits
powers, and since the Yoneda embedding preserves any powers that
exist, this is certainly an embedding of tangent categories, and so
all we need to show is that the tangent structure on
$[\C^\mathrm{op}, \E]$ is in fact representable. The reason that this
is true is that the monoidal structure on $\E$ is in fact
\emph{cartesian}.
\begin{Lemma}
  \label{lem:3}
  The category $\E$ of Proposition~\ref{prop:4} is complete and
  cocomplete, and has its symmetric monoidal structure given by
  cartesian product.
\end{Lemma}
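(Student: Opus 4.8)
The plan is to treat the three assertions in turn, the completeness properties being essentially formal and the identification of the tensor product with finite products resting on one structural observation about $\W$. For completeness and cocompleteness: since $\W$ is small, $\pv[\W] = [\W, \cat{Set}]$ is complete and cocomplete with limits and colimits computed pointwise. By Proposition~\ref{prop:4}, $\E$ is reflective in $\pv[\W]$; and since it is cut out by a limit-preservation condition, $\E$ is in fact closed in $\pv[\W]$ under all limits, so it is complete with limits formed as in $\pv[\W]$. Cocompleteness is then the standard fact that a reflective subcategory of a cocomplete category is cocomplete, its colimits being obtained by applying the reflector $L$ to the colimit taken in $\pv[\W]$.

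For the monoidal structure, the key observation is that $(\W, \otimes, \mathbb N)$ is \emph{cocartesian}: the tensor product of commutative rigs is their coproduct and $\mathbb N$ is the initial rig, and the full subcategory $\W$ of $\cat{CRig}$ --- being spanned by the finite $\otimes$-products of the $W_n$ and containing $\mathbb N = W_0$ --- is closed under these, so inside $\W$ the functor $\otimes$ is the binary coproduct and $\mathbb N$ the initial object. Consequently, for any $F, G \in \pv[\W]$ and $X \in \W$, the universal property of coproducts gives $\W(M \otimes N, X) \cong \W(M,X) \times \W(N,X)$, so that the convolution formula~\eqref{eq:15} collapses, by the co-Yoneda lemma applied in each variable, to $(F \otimes G)(X) \cong FX \times GX$; likewise $y\mathbb N = \W(\mathbb N, \thg)$ is the terminal presheaf $\Delta_1$. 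One then checks that the coherence isomorphisms of the convolution structure are carried through these identifications into the canonical ones for finite products, so that the Day monoidal structure on $\pv[\W]$ \emph{is} its cartesian structure, i.e.\ pointwise finite products.

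It remains to push this down the monoidal reflection. First, $\E$ is closed in $\pv[\W]$ under finite products, since products commute with all limits and so a pointwise product of tangent-limit-preserving functors again preserves tangent limits, while the terminal presheaf $\Delta_1$ --- preserving every limit --- certainly lies in $\E$. Thus the cartesian structure of $\pv[\W]$ restricts to one on $\E$. But by Day's reflection theorem (Proposition~\ref{prop:3}), the induced monoidal structure on $\E$ has unit $L(y\mathbb N)$ and tensor $A \otimes' B = L(JA \otimes JB)$; and since $y\mathbb N = \Delta_1$ and $JA \otimes JB \cong JA \times JB$ already belong to $\E$, the reflector $L$ fixes them. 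Hence $I' \cong \Delta_1$ (the terminal object of $\E$) and $A \otimes' B \cong A \times B$ (the product in $\E$), with the associativity, unit and symmetry constraints inherited accordingly; that is, the symmetric monoidal structure on $\E$ is the cartesian one --- and, since it remains closed by Proposition~\ref{prop:3}, $\E$ is in fact cartesian closed.

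The only step demanding genuine care is the coherence comparison in the second paragraph: one must verify that the pointwise isomorphism $(F \otimes G)(X) \cong FX \times GX$ genuinely identifies the two \emph{symmetric monoidal} structures, rather than merely their underlying bifunctors. This is routine but slightly tedious, amounting to chasing the convolution associator, unitors and symmetry through the universal properties of coproducts in $\W$ and of coends in $\cat{Set}$; everything else in the argument is immediate once the cocartesianness of $\W$ is recognised.
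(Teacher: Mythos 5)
Your proposal is correct, and for the central claim it takes a genuinely different route from the paper. For completeness and cocompleteness you argue via closure of $\E$ under limits in $\pv[\W]$ plus the standard fact about reflective subcategories; the paper instead observes that $\E$ is a small-orthogonality class in a presheaf category, hence locally presentable. These are interchangeable. For cartesianness, both arguments begin from the same observation that $(\W, \otimes, \mathbb N)$ is cocartesian, but then diverge. The paper proceeds structurally: each $A \in \W$ is naturally a commutative monoid, so each representable $y_A \in \E$ is naturally a cocommutative comonoid; density of the representables and closure of commutative comonoids under colimits then equip every object of $\E$ with a natural cocommutative comonoid structure, and Fox's theorem concludes that the monoidal structure is cartesian. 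You instead compute directly that Day convolution over a cocartesian base collapses, via $\W(M \otimes N, X) \cong \W(M,X) \times \W(N,X)$ and the co-Yoneda lemma, to the pointwise product with terminal unit, and then transport this through the reflection using the (correct) observation that $\E$ is closed under finite products, so that $L$ fixes $JA \times JB$ and the terminal presheaf. Your computation buys more explicit information---that finite products in $\E$ are pointwise, and that $\E$ is cartesian \emph{closed} with hom inherited from convolution (which the paper only uses later, in Section~\ref{sec:an-expl-pres})---at the cost of the coherence check you defer at the end; the paper's appeal to Fox disposes of coherence wholesale but yields no formula. The deferred step is genuinely routine, and can in any case be short-circuited: once the unit is terminal and the canonical maps $F \otimes G \rightarrow F \otimes I \cong F$ and $F \otimes G \rightarrow I \otimes G \cong G$ exhibit the tensor as a product, a standard argument (essentially one direction of Fox's theorem) shows the symmetric monoidal structure is the cartesian one.
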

\begin{proof}
  $\E$ is a small-orthogonality class in a presheaf category, so
  locally presentable, so complete and cocomplete;
  see~\cite{Adamek1994Locally}, for example. To see that its monoidal
  structure is cartesian, note first that the monoidal structure
  $(\W, \otimes, \mathbb N)$ is \emph{cocartesian}, so that each
  $A \in \W$ bears a commutative monoid structure, naturally in $A$.
  Since the restricted Yoneda embedding
  $\W^\mathrm{op} \rightarrow \E$ is strong monoidal, each
  $y_A \in \E$ bears a cocommutative comonoid structure, naturally in
  $A$; since any colimit of commutative comonoids is again a
  commutative comonoid, and since the representables are dense in $\E$, it
  follows that each $X \in \E$ has a cocommutative comonoid
  structure, naturally in $X$: which implies~\cite{Fox1976Coalgebras}
  that the monoidal structure is in fact cartesian.
\end{proof}
\begin{Cor}
  \label{cor:2}
  For any small $\E$-category $\C$, the presheaf $\E$-category
  $[\C^\mathrm{op}, \E]$ is complete, cocomplete, and cartesian closed
  as an $\E$-category.
\end{Cor}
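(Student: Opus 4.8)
The plan is to invoke standard enriched category theory, as collected in~\cite{Kelly1982Basic}, together with the cartesian-closedness of $\E$ established in Lemma~\ref{lem:3}. First I would recall that, by Lemma~\ref{lem:3}, the base $\E$ is complete, cocomplete and cartesian closed; in particular it is a symmetric monoidal closed category which is complete and cocomplete, so it is a suitable base for doing enriched category theory with limits and colimits. Given a small $\E$-category $\C$, the presheaf $\E$-category $[\C^\mathrm{op},\E]$ is then complete and cocomplete as an $\E$-category: this is the standard fact that functor $\V$-categories into a complete (resp.\ cocomplete) $\V$ are complete (resp.\ cocomplete), with limits and colimits computed pointwise---see~\cite[Section~3.3]{Kelly1982Basic}. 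So completeness and cocompleteness are immediate.

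For cartesian closure of $[\C^\mathrm{op},\E]$ as an $\E$-category, I would argue as follows. Since $\E$ is cartesian closed, the $\E$-category $[\C^\mathrm{op},\E]$ has finite $\E$-products, again computed pointwise. To exhibit exponentials, note that for presheaves $F, G \colon \C^\mathrm{op} \rightarrow \E$ one expects $G^F$ to be given by the end formula
\begin{equation*}
  G^F(c) = \int_{c' \in \C} \bigl(\C(c',c) \cdot F(c')\bigr)^{G(c')}\rlap{ ,}
\end{equation*}
or more cleanly: the copower-tensored structure of $[\C^\mathrm{op},\E]$ over $\E$ (which exists since $\E$ is cocomplete, with $V \cdot F$ computed pointwise as $V \times F(\thg)$) together with the pointwise products makes $[\C^\mathrm{op},\E]$ into a symmetric monoidal $\E$-category under the pointwise cartesian product; and this monoidal structure is closed because $\C$ is small and $\E$ is complete, so that the requisite ends defining internal homs exist. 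This is again entirely standard: a presheaf $\V$-category valued in a complete, cocomplete, symmetric monoidal closed $\V$ inherits a pointwise symmetric monoidal closed structure whenever the monoidal structure on $\V$ is such that the pointwise tensor has internal homs given by ends, which holds automatically for $\V$ complete (see the discussion of functor categories and Day convolution, or simply~\cite[Section~3.3]{Kelly1982Basic} combined with the fact that $\E$ is cartesian closed). Since the pointwise monoidal structure on $[\C^\mathrm{op},\E]$ is the cartesian one---finite products being computed pointwise---we conclude that $[\C^\mathrm{op},\E]$ is cartesian closed as an $\E$-category.

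The only point requiring a little care---and the one I would flag as the main obstacle---is that these constructions must be carried out \emph{internally}, i.e.\ as $\E$-enriched (co)limits and $\E$-enriched exponentials, rather than merely for the underlying ordinary category $[\C^\mathrm{op},\E]_0$. But this is exactly what the cited results of~\cite[Section~3.3]{Kelly1982Basic} provide: for $\V$ complete and cocomplete, $[\B,\V]$ is $\V$-complete and $\V$-cocomplete for any small $\V$-category $\B$, with (co)limits pointwise; and for $\V$ symmetric monoidal closed and complete, the pointwise monoidal structure on $[\B,\V]$ (when it exists) is again closed. Applying these with $\V = \E$---which satisfies all the hypotheses by Lemma~\ref{lem:3}---and $\B = \C^\mathrm{op}$ gives the statement. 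I would therefore keep the proof short, essentially citing~\cite{Kelly1982Basic} for completeness and cocompleteness and noting that cartesian closure follows because the pointwise symmetric monoidal closed structure on $[\C^\mathrm{op},\E]$ coincides with the cartesian one, the latter because finite products in $[\C^\mathrm{op},\E]$ are computed pointwise and those in $\E$ are cartesian.
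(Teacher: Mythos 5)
Your strategy is essentially the paper's: quote Kelly for completeness and cocompleteness of $[\C^\mathrm{op},\E]$ with pointwise (co)limits (the paper cites \cite[Proposition~3.75]{Kelly1982Basic}), and obtain cartesian closure by exhibiting the exponential as an enriched hom computed by an end. Two things need repair before the cartesian-closure half stands. First, your displayed end formula is inverted: the exponential right adjoint to $(\thg)\times F$ is
\begin{equation*}
  G^F(c) \;\cong\; [\C^\mathrm{op},\E]\bigl(\C(\thg,c)\times F,\, G\bigr) \;\cong\; \int_{c'\in\C} G(c')^{\C(c',c)\times F(c')}\rlap{ ,}
\end{equation*}
whereas you have written $\int_{c'}\bigl(\C(c',c)\cdot F(c')\bigr)^{G(c')}$, with base and exponent swapped.

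Second, and more substantively, your justification that the pointwise monoidal structure is closed ``because $\C$ is small and $\E$ is complete, so the requisite ends exist'' proves only that the candidate object $G^F$ exists; it does not yield the adjunction isomorphism $[\C^\mathrm{op},\E](H\times F,G)\cong[\C^\mathrm{op},\E](H,G^F)$ for arbitrary $H$, which is the actual content of closedness. The missing step---which is exactly where the paper puts its weight---is that $(\thg)\times X\colon\E\rightarrow\E$ is the copower-by-$X$ $\E$-functor and hence cocontinuous (this uses that $\E$ is cartesian \emph{closed}, from Lemma~\ref{lem:3}), and that colimits in $[\C^\mathrm{op},\E]$ are pointwise, so each $(\thg)\times F$ is cocontinuous; the adjunction then holds for representable $H$ by Yoneda and extends to all $H$ by writing $H$ as a colimit of representables. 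You gesture at the copower structure, but you should make the cocontinuity of $(\thg)\times F$ the explicit pivot of the argument; note that pointwise tensors on functor categories are \emph{not} closed for a general base merely because the relevant ends exist---it is the closedness of $\E$ and the pointwiseness of colimits that do the work.
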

\begin{proof}
  Since $\E$ is complete and cocomplete as an ordinary category, the
  completeness and cocompleteness of $[\C^\mathrm{op}, \E]$ as an
  $\E$-category follows from~\cite[Proposition~3.75]{Kelly1982Basic}.
  As for cartesian closedness, we must show that each $\E$-functor
  \begin{equation}
    \label{eq:11}
   (\thg) \times F \colon [\C^\mathrm{op}, \E] \rightarrow
  [\C^\mathrm{op}, \E] 
  \end{equation}
  admits a right adjoint. Now, for each $X \in \E$,
  $(\thg) \times X \colon \E \rightarrow \E$ is the $\E$-functor
  taking \emph{copowers} by $X$ and so is cocontinuous. As limits
  and colimits in functor $\E$-categories are pointwise, each
  $\E$-functor~\eqref{eq:11} is likewise cocontinuous, and so we may
  define a right adjoint $(\thg)^F$ just as in the unenriched case by
  taking:
  \begin{equation*}
    G^F(X) = [\C^\mathrm{op}, \E](\C(\thg, X) \times F, G)\rlap{ .}\qedhere
  \end{equation*}
\end{proof}
\begin{Prop}
  \label{prop:5}
  For any small $\E$-category $\C$, the tangent category corresponding
  under Theorem~\ref{thm:3} to the presheaf $\E$-category
  $[\C^\mathrm{op}, \E]$ is representable.
\end{Prop}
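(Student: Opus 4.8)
The plan is to unwind Definition~\ref{def:6} and check its two equivalent conditions for the tangent structure that Theorem~\ref{thm:3} assigns to $[\C^\op, \E]$. Recall from Proposition~\ref{prop:1} (via Theorem~\ref{thm:3}) that the tangent bundle functors $T_n$ on an $\E$-category $\D$ admitting powers by representables are given by powers: $T_n = y_{W_n} \pitchfork (\thg)$. So I must show that each $y_{W_n} \pitchfork (\thg)$ on $[\C^\op, \E]$ is of the form $(\thg)^{D_n}$ for a suitable $D_n$; by the ``equivalently'' clause of Definition~\ref{def:6}, it in fact suffices to identify $T = T_1$ with $(\thg)^D$ for one object $D$ and observe that the fibre coproducts $D_n = D +_0 \cdots +_0 D$ exist — the latter because $[\C^\op, \E]$ is cocomplete as an $\E$-category by Corollary~\ref{cor:2}.

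\emph{First} I would recall the general fact that in any $\V$-category $\D$ with powers, copowers and an internal hom (cartesian closed structure), powers by an object are computed as internal homs: precisely, $V \pitchfork X \cong X^{(V \cdot \aone)}$, where $V \cdot \aone$ is the copower of the unit object $\aone$ by $V$, when $\D$ has a terminal object $\aone$ — or more simply, since $\E$ is cartesian by Lemma~\ref{lem:3}, powers and internal homs in $[\C^\op, \E]$ both exist and one checks directly from the defining universal properties (Definition~\ref{def:5} and the right adjoint $(\thg)^F$ of Corollary~\ref{cor:2}) that $y_V \pitchfork X \cong X^{(y_V \cdot \aone)}$ where $\aone$ is the terminal presheaf. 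Thus $T_n = (\thg)^{D_n}$ with $D_n \defeq y_{W_n} \cdot \aone$, the copower of the terminal $\E$-presheaf by the representable $y_{W_n}$.

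\emph{Then} I would check the cleaner ``equivalently'' condition: that $D_n$ arises as the iterated fibre coproduct $D +_0 \cdots +_0 D$ over the point $0 \colon \aone \to D$. This is where the cocartesian monoidal structure $(\W, \otimes, \mathbb N)$ does its work. The tangent limit $W(n) \cong W \times_{\mathbb N} \cdots \times_{\mathbb N} W$ in $\W$ is, dually, an iterated fibre \emph{coproduct} $W +_{\mathbb{N}} \cdots +_{\mathbb N} W$ in $\W^\op$; applying the strong monoidal, colimit-preserving functor $(\thg) \cdot \aone \colon \E \to [\C^\op, \E]$ precomposed with the (strong monoidal) restricted Yoneda embedding $\W^\op \to \E$ of Lemma~\ref{lem:3} sends this to the corresponding iterated fibre coproduct of $D = y_W \cdot \aone$ in $[\C^\op, \E]$, and one checks that the basepoint $\mathbb N \to W$ is carried to exactly the map $0$ of Definition~\ref{def:6} (this uses that $p_D$ is the image of $! \colon W \to \mathbb N$ and that $\overline{\id_D}$ is the unit of the power, which traces back to $\ell$ and $e$ in $\W$ — a short diagram chase). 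The existence of these fibre coproducts is guaranteed by $\E$-cocompleteness.

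\emph{The main obstacle} I anticipate is the bookkeeping in the last step: verifying that the abstract power–hom identification genuinely matches the concrete map $0 \colon \aone \to D$ singled out in Definition~\ref{def:6}, i.e.\ that the $D_n$ produced by ``power $=$ hom into a copower'' really is the $D_n$ built from $D$ by the prescribed fibre-coproduct recipe, and not merely abstractly isomorphic to $T_n$'s representing object in some uncontrolled way. Everything else — cocompleteness, cartesian closure, strong monoidality of Yoneda — is already in hand from Lemma~\ref{lem:3} and Corollary~\ref{cor:2}; it is only this compatibility of basepoints and the naturality of the comonoid structures from Lemma~\ref{lem:3} that requires care, and I would handle it by tracking the relevant universal elements through the adjunctions rather than by any lengthy computation.
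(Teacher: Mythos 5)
Your proposal is correct and its essential step is the same as the paper's: each $T_n = y_{W_n} \pitchfork (\thg)$ is rewritten as the exponential $(\thg)^{y_{W_n}\cdot 1}$, using the cartesian closedness and cocompleteness of $[\C^\op,\E]$ supplied by Corollary~\ref{cor:2} (the paper phrases the identification as the limit-preservation of $X^{(\thg)}$ applied to the copower $E\cdot 1$). Note that this already verifies the first clause of Definition~\ref{def:6} for every $n$, so your entire second step --- matching $D_n$ with the iterated fibre coproduct over the basepoint $0$ --- is redundant bookkeeping that the paper simply omits.
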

\begin{proof}
  This tangent category is the underlying ordinary category of
  $[\C^\mathrm{op}, \E]$ equipped with the tangent structure
  $T_nX = y_{W_n} \pitchfork X$. Since $[\C^\mathrm{op}, \E]$ is
  cartesian closed as an $\E$-category, its underlying category is
  also cartesian closed, and so we need only show that each functor
  $T_n$ is given by an exponential. Now, since $[\C^\mathrm{op}, \E]$
  is cocomplete as an $\E$-category, it admits all copowers; thus, as for any
  object $X \in [\C^\mathrm{op}, \E]$ the exponential $\E$-functor
  $X^{(\thg)} \colon [\C^\mathrm{op}, \E]^\mathrm{op} \rightarrow
  [\C^\mathrm{op}, \E]$ 
  preserves limits, in particular powers, we have for each $E \in \E$ 
  an isomorphism
  \begin{equation*}
    X^{(E \cdot 1)} \cong E \pitchfork X^1 \cong E \pitchfork X\rlap{ ,}
  \end{equation*}
  so that \emph{any} power in $[\C^\mathrm{op}, \E]$, and in
  particular each $T_n$, can be computed as an $\E$-enriched
  exponential.
\end{proof}
Combining this with the remarks that began this section, we obtain:
\begin{Thm}
  \label{thm:4}
  For any small tangent category $\C$, the $\E$-enriched Yoneda embedding $\C
  \rightarrow [\C^\mathrm{op}, \E]$ provides a full tangent-preserving embedding
  of $\C$ into a representable tangent category. 
\end{Thm}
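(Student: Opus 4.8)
The plan is to read the theorem off from the results already established, observing that each of the three assertions packaged into its statement has, in effect, been proved in advance. First I would invoke Theorem~\ref{thm:3} to regard the small tangent category $\C$ as a small $\E$-category admitting powers by representables, and form the presheaf $\E$-category $[\C^\op, \E]$. By Corollary~\ref{cor:2} this $\E$-category is complete, hence admits all powers---and so \emph{a fortiori} admits powers by all representables; thus $[\C^\op, \E]$ is an object of $\E\text-\cat{CAT}_\pitchfork$, and hence by Theorem~\ref{thm:3} underlies a genuine tangent category, which moreover by Proposition~\ref{prop:5} is \emph{representable}. It therefore only remains to exhibit the $\E$-enriched Yoneda embedding as a full, tangent-preserving functor into this tangent category.

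For this, I would first observe that the $\E$-enriched Yoneda embedding $Y \colon \C \rightarrow [\C^\op, \E]$ is a $1$-cell of $\E\text-\cat{CAT}_\pitchfork$: being continuous, it preserves any weighted limit---in particular any power by a representable---that exists in $\C$ (\cite{Kelly1982Basic}). Hence, under Theorem~\ref{thm:3}, $Y$ transports to a tangent functor from $\C$ into the representable tangent category underlying $[\C^\op, \E]$. That this tangent functor is moreover a full embedding is a matter of unwinding definitions: its underlying ordinary functor is the classical Yoneda embedding $x \mapsto \C(\thg, x)$, which is injective on objects, while the $\E$-enriched Yoneda lemma supplies isomorphisms $[\C^\op, \E]\bigl(\C(\thg, x), \C(\thg, y)\bigr) \cong \C(x,y)$ in $\E$ which, upon applying the underlying-set functor $\E \rightarrow \cat{Set}$, become the bijections witnessing full faithfulness. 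Assembling this with the previous paragraph completes the proof.

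I do not anticipate a genuine obstacle here: the theorem is ultimately a corollary of the machinery assembled in Sections~\ref{sec:tang-categ-as-1}--\ref{sec:an-embedding-theorem}, the substantive work---Leung's coherence theorem, and the Day-reflection construction of $\E$---all lying earlier. The only thing requiring care is bookkeeping across the equivalence of Theorem~\ref{thm:3}: one must confirm that the enriched-categorical notions of ``power-preserving $\E$-functor'' and ``fully faithful $\E$-functor'' correspond to ``tangent functor'' and ``full embedding'' in the sense used throughout the paper, and that the standard facts about $\V$-enriched Yoneda embeddings apply over our particular base---which they do, since by Lemma~\ref{lem:3} and Corollary~\ref{cor:2} the base $\E$ is a complete, cocomplete, cartesian closed category.
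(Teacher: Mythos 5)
Your proposal is correct and follows essentially the same route as the paper: the paper likewise observes that $[\C^\op,\E]$ admits all powers (hence is an object of $\E\text-\cat{CAT}_\pitchfork$), that the $\E$-enriched Yoneda embedding is fully faithful and preserves whatever powers exist in $\C$ (hence is a full tangent embedding under Theorem~\ref{thm:3}), and then appeals to Proposition~\ref{prop:5} for representability. Your additional care about full faithfulness via the enriched Yoneda lemma and the bookkeeping across the equivalence of Theorem~\ref{thm:3} only makes explicit what the paper leaves as standard.
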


\section{An explicit presentation}
\label{sec:an-expl-pres}
To conclude the paper, we extract an explicit presentation of
the representable tangent category $[\C^\mathrm{op}, \E]$ into which
the preceding theorem embeds each small tangent category $\C$.
Consider first the case where $\C$ is the terminal tangent category
$1$: now $[\C^\mathrm{op}, \E]$ is simply $\E$ itself \emph{qua}
$\E$-enriched category, and powers by objects of $\E$ are simply given
by the internal hom of $\E$. So $\E$ is a representable tangent
category with tangent functor
\begin{equation}\label{eq:9}
  TX = X^{y_W} \cong X(W \otimes \thg) = X(T\thg)
\end{equation}
where the isomorphism comes from the formula~\eqref{eq:15} for the
internal hom in $[\W, \cat{Set}]$, which by Proposition~\ref{prop:3}
is equally the internal hom in $\E$. Of course, the representing object
for this tangent structure is $y_W \in \E$.

Consider now the case of a general tangent category $\C$. Objects of
$[\C^\mathrm{op}, \E]$ are $\E$-enriched functors
$\C^\mathrm{op} \rightarrow \E$, which are equally $\E$-enriched
functors $\C \rightarrow \E^\mathrm{op}$. Since \emph{qua}
$\E$-category both $\C$ and $\E^\mathrm{op}$ admit powers by
representables, we may by Remark~\ref{rk:2} identify such
$\E$-functors with lax tangent functors
$\C \rightarrow \E^\mathrm{op}$; here, the tangent structure on
$\E^\mathrm{op}$ is induced by the $\E$-enriched \emph{copowers} of
$\E$ and so given by $TX = y_W \times X$ (where the product here is taken in
$\E$).

It follows that a lax tangent functor $\C \rightarrow \E^\mathrm{op}$
comprises an ordinary functor $H \colon \C \rightarrow \E^\mathrm{op}$
together with a transformation
$\varphi \colon HT \Rightarrow y_W \times H(\thg)$ in
$[\C, \E^\mathrm{op}]$ rendering commutative the diagrams
in~\eqref{eq:4}. This is equally a functor
$H \colon \C^\mathrm{op} \rightarrow \E$ together with a natural
family of maps $y_W \times HC \rightarrow H(TC)$ in $\E$, or equally by
adjointness, a natural family of maps
\begin{equation*}
  \varphi_C \colon HC \rightarrow H(TC)^{y_W} \cong H(TC)(T\thg) 
\end{equation*}
in $\E$ satisfying suitable axioms. Now, giving
$H \colon \C^\mathrm{op} \rightarrow \E$ is in turn equivalent to
giving a functor
$H \colon \C^\mathrm{op} \times \W \rightarrow \cat{Set}$ which
preserves tangent limits in its second variable; and $\varphi$ is
now equally a family of maps
\begin{equation*}
  \varphi_{C,A} \colon H(C,A) \rightarrow H(TC,TA)
\end{equation*}
natural in $C \in \C$ and $A \in \W$ and rendering commutative those
diagrams which correspond to the axioms in~\eqref{eq:4}. All told, we
see that see that objects of the $\E$-functor category
$[\C^\mathrm{op}, \E]$ are equally well \emph{tangent modules}
$\C \tor \W$ in the sense of the following definition:
\begin{Defn}
  \label{def:7}
  A \emph{tangent module}
  $\C \tor \D$ between tangent categories comprises:
  \begin{itemize}
  \item A functor
    $X \colon \C^\mathrm{op} \times \D \rightarrow \cat{Set}$
    preserving tangent limits in its second variable;
  \item A family of maps $T \colon X(C,D) \rightarrow X(TC,TD)$ which
    are natural in $C$ and $D$, and make the following diagrams
    commute for all $x \in X(C,D)$:
    \begin{equation*}
      \cd{
        {C} \ar@{~>}[r]^-{x} \ar[d]_{e_C} &
        {D} \ar[d]^{e_D} \\
        {TC} \ar@{~>}[r]^-{Tx} \ar[d]_{p_C} &
        {TD}\ar[d]^{p_D} \\
        {C} \ar@{~>}[r]^-{x} &
        {D}
      } \qquad
      \cd[@C+1em]{
        {TC \times_{p_C} TC} \ar@{~>}[r]^-{Tx \times_{x} Tx} \ar[d]_{m_C} &
        {TD \times_{p_D} TD} \ar[d]^{m_D} \\
        {TC} \ar@{~>}[r]^-{Tx} \ar[d]_{\ell_C} &
        {TD}\ar[d]^{\ell_D} \\
        {TTC} \ar@{~>}[r]^-{TTx} &
        {TTD}
      } \qquad
      \cd{
        {TTC} \ar@{~>}[r]^-{TTx} \ar[d]_{c_C}&
        {TTD} \ar[d]^{c_D} \\
        {TTC} \ar@{~>}[r]^-{TTx} &
        {TTD}\rlap{ .}
      }
    \end{equation*}
  \end{itemize}
    Here, we use the evident notation for elements of the module
    $X$, and for the action on such elements by maps in $\C$ and $\D$.
    Note that, to construct the element top centre, we use $X$'s
    preservation of tangent pullbacks in its second variable.

    A \emph{map of tangent modules} $f \colon X \rightarrow Y$ is
    a natural transformation $f \colon X \Rightarrow Y$ commuting with
    $T_X$ and $T_Y$ in the evident sense. We write
    $\cat{TMod}(\C, \D)$ for the category of tangent modules from
    $\C$ to $\D$, and endow it with a tangent structure by defining
    $TX$ to be the tangent module with components
    $(TX)(C,D) = X(C,TD)$ and with operation
    \begin{equation*}
      T_{TX} = X(C,TD) \xrightarrow{T_X} X(TC,TTD) \xrightarrow{c_D
        \circ (\thg)} X(TC,TTD)\rlap{ .}
    \end{equation*}
    The remaining data for the tangent structure on
    $\cat{TMod}(\C, \D)$ is obtained from the corresponding data in
    $\D$ by postcomposition.
  \end{Defn}
  \begin{Rk}
    \label{rk:3}
    If $X \colon \C \tor \D$ is a tangent module, then we obtain a new
    tangent category $\mathrm{coll}(X)$, the so-called \emph{collage}~\cite{Street2004Cauchy}
    of $X$, whose objects are the disjoint union of those of $\C$ and
    $\D$, whose morphism-sets are defined by:
    \begin{align*}
      \mathrm{coll}(X)(C,C') &= \C(C,C') &
      \mathrm{coll}(X)(D,D') &= \C(D,D') \\
      \mathrm{coll}(X)(C,D) &= X(C,D)&
      \mathrm{coll}(X)(D,C) &= \emptyset 
    \end{align*}
    for all $C,C' \in \C$ and $D,D' \in \D$, whose tangent functor is
    defined from the tangent functors of $\C$ and $\D$ together with
    the family of maps $X(C,D) \rightarrow X(TC,TD)$, and whose
    remaining data for the tangent structure is obtained from that in
    $\C$ and in $\D$. This $\mathrm{coll}(X)$ is 
    a \emph{bipartite} tangent category, in the sense that it admits a
    tangent functor to the arrow category $\atwo$ endowed with the
    trivial tangent structure. In fact, it is easy to see that tangent
    modules from $\C$ to $\D$ are the same as bipartite tangent
    categories $p \colon \X \rightarrow \atwo$ such that
    $p^{-1}(0) = \C$ and $p^{-1}(1) = \D$.

    To give a universal characterisation of the collage of a tangent
    module, we would have to construct the
    \emph{equipment}~\cite{Wood1982Abstract} of tangent categories,
    tangent functors and tangent profunctors; we leave consideration
    of this to future work.
  \end{Rk}
\begin{Prop}
  \label{prop:6}
  For any tangent category $\C$, the underlying tangent category of
  the $\E$-category $[\C^\mathrm{op}, \E]$ is isomorphic to
  $\cat{TMod}(\C, \W)$.
\end{Prop}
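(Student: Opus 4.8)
The plan is to make precise the informal correspondence sketched in the discussion preceding Definition~\ref{def:7}, and then to verify that it carries the tangent structure on one side to that on the other, so that it becomes an isomorphism of tangent categories with identity comparison.

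First, at the level of objects. By Remark~\ref{rk:2}, an $\E$-functor $\C^{\mathrm{op}} \to \E$ is the same as an $\E$-functor $\C \to \E^{\mathrm{op}}$, hence a lax tangent functor $H \colon \C \to \E^{\mathrm{op}}$, where $\E^{\mathrm{op}}$ carries the tangent structure $TX = y_W \times X$ induced by copowers in $\E$. Transposing the comparison cell $\varphi \colon HT \Rightarrow y_W \times H({\thg})$ across the product--internal-hom adjunction of $\E$, and then across the inclusion $\E \subseteq [\W, \cat{Set}]$ using the internal-hom formula~\eqref{eq:15} in the shape $G^{y_W} \cong G(W \otimes {\thg})$ of~\eqref{eq:9}, turns the data of $H$ into a functor $X \colon \C^{\mathrm{op}} \times \W \to \cat{Set}$ preserving tangent limits in the second variable (this being automatic, since the values of an $\E$-functor lie in $\E$), together with a natural family $\varphi_{C,A} \colon X(C,A) \to X(TC,TA)$. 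A direct inspection then shows that, under these transpositions, the axioms~\eqref{eq:4} defining a lax tangent functor become precisely the three families of commuting squares in Definition~\ref{def:7}: the triangle in~\eqref{eq:4} yields the two squares governing $e$ and $p$, while the squares for $m$, $\ell$ and $c$ yield those for $m$, $\ell$ and $c$. This gives a bijection between objects of $[\C^{\mathrm{op}}, \E]$ and tangent modules $\C \tor \W$. Unwinding along the same identifications, an $\E$-natural transformation between $\E$-functors $\C^{\mathrm{op}} \to \E$ is exactly a natural transformation $X \Rightarrow Y$ commuting with $\varphi_X$ and $\varphi_Y$, i.e.\ a map of tangent modules; so $[\C^{\mathrm{op}}, \E]$ and $\cat{TMod}(\C, \W)$ have isomorphic underlying categories.

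It remains to check that this isomorphism of categories respects the tangent structure. The tangent structure transported to $[\C^{\mathrm{op}}, \E]$ by Theorem~\ref{thm:3} has $T_n X = y_{W_n} \pitchfork X$, with structure maps the $\Phi$-images of $e_{\mathbb N}$, $m_{\mathbb N}$, $\ell_{\mathbb N}$, $c_{\mathbb N}$ under the strong monoidal $\Phi$ of Corollary~\ref{cor:1}. Powers in a presheaf $\E$-category are pointwise, and a power by $y_W$ in $\E$ is the internal hom $({\thg})^{y_W} \cong ({\thg})(W \otimes {\thg})$ of~\eqref{eq:9}; hence $(TX)(C,A) \cong X(C, W \otimes A) = X(C, TA)$, which is the tangent functor of Definition~\ref{def:7}. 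Likewise, tracing $\Phi(p_{\mathbb N})$, $\Phi(e_{\mathbb N})$, $\Phi(m_{\mathbb N})$ and $\Phi(\ell_{\mathbb N})$ through the transpositions above shows that the maps $p, e, m, \ell$ on $[\C^{\mathrm{op}}, \E]$ become post-composition in the $\W$-variable by $p_D, e_D, m_D, \ell_D$, exactly as prescribed in Definition~\ref{def:7}, and the same check works on morphisms.

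The one point requiring genuine care---and, I expect, the main obstacle---is the characteristic twist by $c_D$ in the formula for the operation $T_{TX}$ in Definition~\ref{def:7}. It arises because the two natural ways of exhibiting $y_W \pitchfork (y_W \pitchfork X)$ as a power of $X$ by $W \otimes W$---one via the associativity of iterated powers~\cite[Section~3.7]{Kelly1982Basic}, the other via the convention $(TX)(C,D) = X(C, TD)$ built into the definition of $\cat{TMod}$---differ by the symmetry $y_W \otimes y_W \cong y_W \otimes y_W$, equivalently by the flip $c_{\mathbb N} \colon W \otimes W \to W \otimes W$ of the tangent structure on $\W$; transported to the second variable this is exactly post-composition by $c_D$. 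Once this bookkeeping with the symmetry and associativity isomorphisms of the convolution structure on $[\W, \cat{Set}]$ is carried out, the isomorphism of underlying categories is seen to be a strict tangent functor, which completes the proof.
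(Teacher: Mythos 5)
Your proposal is correct and follows essentially the same route as the paper: the paper's proof likewise just invokes the object/morphism correspondence established in the discussion before Definition~\ref{def:7}, together with the pointwise computation of powers in $[\C^{\mathrm{op}},\E]$ and the description~\eqref{eq:9} of powers by $y_W$ in $\E$ as $X \mapsto X(W\otimes\thg)$. Your additional accounting for the $c_D$ twist in $T_{TX}$ via the symmetry isomorphisms of the convolution structure (already implicit in~\eqref{eq:9}, since the hom formula~\eqref{eq:15} naturally yields $X(\thg\otimes W)$ rather than $X(W\otimes\thg)$) correctly fills in a detail the paper dismisses as following ``easily''.
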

\begin{proof}
  The bijection on objects was verified above, and that on morphisms
  is equally straightforward. All that remains is to show that the
  tangent structures on $\cat{TMod}(\C, \W)$ and on $[\C^\mathrm{op},
  \E]$ coincide; which follows easily from the description above of
  the tangent structure on $\E$, and the fact that powers in a functor
  $\E$-category are computed pointwise.
\end{proof}
In particular, this result tells us that $\cat{TMod}(\C, \W)$ is a
representable tangent category; the representing object is by
Proposition~\ref{prop:5} the copower of the terminal object of
$\cat{TMod}(\C, \W)$ by
$y_W \in \E$; this is the object $D \in \cat{TMod}(\C, \W)$
given by
\begin{equation}\label{eq:12}
  D(C,A) = \W(W,A)
\end{equation}
and with $T_{D} \colon D(C,A) \rightarrow D(TC,
TA)$ given (after some calculation) by
\begin{equation}\label{eq:13}
  \begin{aligned}
    \W(W,A) &\rightarrow \W(W,W\otimes A) \\
    f & \mapsto e_A \circ f\rlap{ .}
  \end{aligned}
\end{equation}

Finally, let us give a concrete characterisation of the action of the
$\E$-enriched Yoneda embedding $\C \rightarrow [\C^\mathrm{op}, \E]$.
This sends $X \in \C$ to the $\E$-functor
$\C(\thg, X) \colon \C^\mathrm{op} \rightarrow \E$, which corresponds
to the tangent module
$YX \colon \C^\mathrm{op} \times \W \rightarrow \cat{Set}$ with
\begin{equation*}
  YX(C, A) = \C(C, A \ast X)
\end{equation*}
and with $T_{YX} \colon YX(C,A) \rightarrow YX(TC,TA)$ sending an
element $f \colon C \rightarrow A \ast X$ of $YX(C,A)$ to the element
\begin{equation*}
  TC \xrightarrow{Tf} T(A
  \ast X) = W \ast (A \ast X) \xrightarrow{\cong} (W \otimes A) \ast X
  = TA \ast X
\end{equation*}
of $YX(TC,TA)$. Putting all the above together, we obtain the
following more concrete form of the embedding theorem:
\begin{Thm}
\label{thm:5}
For any small tangent category $\C$, there is a full tangent embedding
$Y \colon \C \rightarrow \cat{TMod}(\C, \W)$ into the representable
tangent category of tangent modules from $\C$ to $\W$.
\end{Thm}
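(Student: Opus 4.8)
The plan is to assemble the pieces already established. By Theorem~\ref{thm:4}, the $\E$-enriched Yoneda embedding $Y \colon \C \rightarrow [\C^\mathrm{op}, \E]$ is a full, tangent-limit-preserving embedding whose codomain, regarded as a tangent category via Theorem~\ref{thm:3}, is representable. By Proposition~\ref{prop:6}, that codomain tangent category is isomorphic to $\cat{TMod}(\C, \W)$. Transporting $Y$ along this isomorphism of tangent categories yields a functor $\C \rightarrow \cat{TMod}(\C, \W)$; and since an isomorphism of tangent categories is in particular an isomorphism of underlying ordinary categories compatible with all tangent data, it preserves and reflects fullness, faithfulness, preservation of tangent limits and representability. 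Hence the transported functor is again a full tangent embedding into a representable tangent category, which proves the theorem \emph{as stated}. What remains, in order to obtain the advertised \emph{concrete} description, is to identify this transported functor with the explicit $Y$ written down just before the statement.

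For that identification I would proceed in two steps. First, unwinding the chain of equivalences Theorem~\ref{thm:2}, Proposition~\ref{prop:2}, Theorem~\ref{thm:3}, one sees that $\C$ itself corresponds to the $\W$-actegory $(\C, \ast)$, and hence to the $\E$-category whose hom-presheaves are $\C(X,Y)(A) = \C(X, A \ast Y)$; applying the $\E$-enriched Yoneda lemma then sends $C \in \C$ to the presheaf $\C(\thg, C)$, which under these identifications is precisely the functor $YC \colon \C^\mathrm{op} \times \W \rightarrow \cat{Set}$ with $YC(D,A) = \C(D, A \ast C)$. Second, I would trace the tangent operation: by Proposition~\ref{prop:5} and the pointwise computation of powers in functor $\E$-categories, the structure map on each power $T_n$ of $[\C^\mathrm{op}, \E]$ is induced by the copower structure of $\E$, and chasing this through the definition of $T_{\cat{TMod}}$ in Definition~\ref{def:7} produces exactly the prescription sending $f \colon D \rightarrow A \ast C$ to $Tf$ followed by the canonical isomorphism $T(A \ast C) = W \ast (A \ast C) \cong (W \otimes A) \ast C$. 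This is the computation alluded to by ``putting all the above together'' in the paragraph preceding the theorem.

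The only genuinely non-routine point is this last compatibility check: that the abstract $\E$-enriched Yoneda embedding coincides, tangent operation and all, with the hands-on module $YC$; everything else is formal transport along the isomorphism of Proposition~\ref{prop:6}. I would organise the check by first matching the underlying functors $\C^\mathrm{op} \times \W \rightarrow \cat{Set}$, which is immediate from the formula for $\E$-enriched hom-presheaves, and then matching the operations $T_{YC}$ by naturality: both are determined by their values on the identity-type generating elements, so it suffices to verify agreement there, which reduces to the description~\eqref{eq:9} of the tangent structure on $\E$ together with the coproduct-injection formula~\eqref{eq:13} for the representing object. Once this is done, the theorem follows by simply rephrasing Theorem~\ref{thm:4} through Proposition~\ref{prop:6}.
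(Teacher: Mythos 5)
Your proposal is correct and follows essentially the same route as the paper: the paper also obtains Theorem~\ref{thm:5} by combining Theorem~\ref{thm:4} with the isomorphism of Proposition~\ref{prop:6} and then identifying the transported Yoneda embedding with the explicit module $YC(D,A) = \C(D, A \ast C)$ via the chain of equivalences from Sections~\ref{sec:tang-categ-as-1} and~\ref{sec:tang-categ-as}. Your division into formal transport plus one non-routine compatibility check of the tangent operation $T_{YC}$ matches the paper's ``putting all the above together'' paragraph.
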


Having arrived at this concrete form of the embedding theorem, one
might be tempted to dismantle the abstract scaffolding by which it was
obtained. However, there are several reasons why this would be not
only disingenuous but positively unhelpful. In the first instance, the
concrete description is subtle enough that without the abstract
justification it would appear entirely \emph{ad hoc}. Secondly,
without the general theory behind it, a detailed proof of
Theorem~\ref{thm:5} from first principles would be rather
involved---requiring us to show by hand that $\cat{TMod}(\C, \W)$ is a
tangent category, that it is representable, and that
$Y \colon \C \rightarrow \cat{TMod}(\C, \W)$ is a fully faithful
tangent functor.

Finally, the enriched-categorical viewpoint encourages us to look at
tangent categories in a different way. For example, it is immediate
from the enriched perspective that the functor
$T \colon \C \rightarrow \C$ associated to any tangent category is in
fact a tangent functor (since it is an $\E$-enriched power functor,
and as such preserves $\E$-enriched powers); or that the $2$-category
of tangent categories and tangent functors admits all bilimits and
bicolimits. As indicated in the introduction, we hope to exploit the
full power of this viewpoint in forthcoming work.

\end{document}